\newcommand\R{\mathbb{R}}
\renewcommand\le{\leqslant}
\renewcommand\ge{\geqslant}
\renewcommand\to{\rightarrow}
	\def\R{\mathbb{R}}
	\def\<{\langle }
	\def\>{\rangle }
\newtheorem{theor}{Theorem}[section]
\newtheorem{prop}[theor]{Proposition}
\newtheorem{lemma}[theor]{Lemma}
\newtheorem{conj}[theor]{Conjecture}
\theoremstyle{definition}
\newtheorem{defi}[theor]{Definition}
\theoremstyle{remark}
\newtheorem{rema}[theor]{Remark}
\newtheorem{exam}[theor]{Example}
\newtheorem*{claim*}{Claim}
\newtheorem*{qu*}{Question}
\newcommand{\Zdd}{\mathbb Z^2}
\newcommand{\Zd}{\mathbb Z^d}
\newcommand{\p}{\mathbb P}
\newcommand{\e}{\mathbb E}
\newcommand{\U}{\mathcal U}
\newcommand{\UU}{\mathcal U}
\newcommand{\dhp}{\mathbb H_u}
\newcommand{\Ss}{\mathcal{S}}
\begin{document}

\title{Fixation for two-dimensional $\UU$-Ising and $\UU$-voter dynamics}
\author{Daniel Blanquicett}
\address{Mathematics Department,
University of California, Davis, CA 95616, USA}
\email{drbt@math.ucdavis.edu}
\thanks{{\it Date}: March 4, 2020.\\
\indent 2010 {\it Mathematics Subject Classification.}  Primary 60K35; Secondary 82C20.\\
\indent {\it Key words and phrases.}  Ising model, Voter model, Glauber dynamics, Bootstrap percolation.\\
\indent The author was partially supported by CAPES, Brasil.}


	
\begin{abstract}
Given a finite family $\U$ of finite subsets of $\Zd\setminus \{0\}$,
the $\U${\it -voter dynamics} in the space of configurations $\{+,-\}^{\Zd}$ is defined
as follows: every $v\in\Zd$ has an independent exponential random clock, and
when the clock at $v$ rings, the vertex $v$ chooses $X\in\U$ uniformly at random. If the set $v+X$ is entirely in state $+$ (resp. $-$), then the state of $v$ updates to $+$ (resp. $-$), otherwise nothing happens.
The {\it critical probability} $p_c^{\text{vot}}(\Zd,\U)$ for this model is the infimum over $p$ such that this system
almost surely fixates at $+$ when the initial states for the vertices are chosen independently to be $+$ with
probability $p$ and to be $-$ with probability $1-p$.
We prove that $p_c^{\text{vot}}(\Zdd,\U)<1$ for a wide class of families $\U$.

We moreover consider the $\UU$-Ising dynamics and show that this model also exhibits the same phase transition.

\end{abstract}
	
\maketitle 
\section{Introduction}
  Given some spin dynamics on $ \Zd$, the {\it critical probability} for fixation is the infimum over
$p\in[0,1]$ such that fixation at $+$ occurs almost surely when the initial states for the vertices are chosen independently
to be $+$ with probability $p$ and to be $-$ with probability $1-p$.
For the zero-temperature Glauber dynamics of the Ising model, Fontes, Schonmann and Sidoravicius \cite{FSS02} showed that
$p_c^{\text{Is}}(\Zd)<1$ (Theorem \ref{VladasT}). In other words, there exists a {\it phase transition}, since by symmetry between $+$ and $-$, $p_c^{\text{Is}}(\Zd)\ge 1/2$.

In recent groundbreaking work, Bollob\'as, Smith and Uzzell \cite{BSU15} introduced the $\U$-bootstrap percolation model (see Section \ref{bpf}),
where $\U$ 
is a finite family of finite
subsets of $\Zd\setminus \{0\}$, which motivated Morris \cite{Morris17} to generalize the Glauber dynamics by defining the $\UU$-Ising dynamics (see Section \ref{Glauber}); he conjectured that 
for the so called {\it critical} families,
this model also exhibits a phase transition.
In this note we prove that this conjecture is true under suitable conditions.
We also consider a variant of these dynamics that we call the  $\UU$-voter dynamics (see Section \ref{Uvoter}), and show that 
in this case, for a wide class of critical families, we also have a phase transition.

\subsection{The $\UU$-Ising dynamics}\label{Glauber}
Let $\UU=\{X_1,\dots,X_m\}$ be an arbitrary finite family of finite subsets of $\Zd\setminus \{0\}$. 
Given a configuration in $\{+,-\}^{\Zd}$, we say that $X\in\UU$ 
{\it disagrees with vertex} $v\in\Zd$ if each vertex in $v+X$ 
 has the opposite state to that of $v$.
The $\UU${\it -Ising dynamics} on $\Zd$ with states $+$ and $-$ were introduced by Morris \cite{Morris17}
as follows:
 \begin{itemize}
 \item Every $v\in\Zd$ has an independent exponential random clock with rate 1.
  \item When the clock at vertex $v$ rings at (continuous) time $t\ge 0$, 
  if {\it there exists} $X\in\UU$ which disagrees with $v$, then $v$ flips its state. Otherwise nothing happens.
 \end{itemize}
 
We are interested in the long-term behavior of this system, starting from a randomly chosen initial state,  and ask whether the dynamics fixate or not.

Special cases of these dynamics have been extensively studied, 
for example,  consider the family $\mathcal N_r^d$ defined as
the collection of all subsets of size $\ge r$ of $\{\pm e_1,\dots,\pm e_d\}$;
when $r=d$ this process coincides with the so called
{\it zero-temperature Glauber dynamics of the Ising model} (sometimes called {\it Metropolis dynamics}), see, for example \cite{Martinelli99}.

Let $\sigma_t\in\{+,-\}^{\Zd}$ denote the state of the system at time $t\ge 0$.
Say that dynamics {\it fixate at} + if for each vertex $v\in\Zd$, there is a time
$T_v\in[0,\infty)$ such that $\sigma_t(v)=+$ for all $t\ge T_v$, in other words, if the state of
every vertex is eventually $+$.
Now fix $p\in[0,1]$; we say that a set $A\subset\Zd$ is $p${\it-random} if 
it is chosen according to the Bernoulli product measure on $\Zd$ 
(i.e. each of the sites of $\Zd$ are included in $A$ independently with probability $p$).
Let the set $\{v\in\Zd:\sigma_0(v)=+\}$ be chosen $p$-randomly and
write $\p_p$ for the joint distribution of 
the initial spins and the dynamics realizations.
 We define the {\it critical probability} for the $\U$-Ising dynamics to be
 \[p_c^{\text{Is}}(\Zd,\U):=\inf\left\{p:\p_p(\text{$\U$-Ising dynamics fixate at }+)=1\right\},\]
and write $p_c^{\text{Is}}(\Zd)$ for $p_c^{\text{Is}}(\Zd,\mathcal N_d^d)$.
Arratia \cite{Arratia83} proved that $p_c^{\text{Is}}(\mathbbm Z)=1$, 
and moreover that, for every $p\in(0,1)$, 
every site changes state an infinite number of times.
A well-known (and possibly folklore) conjecture states that
$p_c^{\text{Is}}(\Zd)=1/2$ for every $d\ge 2$.
The first progress towards this conjecture was the following upper bound, proved by
Fontes, Schonmann and Sidoravicius \cite{FSS02}.
\begin{theor}[Fontes, Schonmann and Sidoravicius]\label{VladasT}
$p_c^{\textup{Is}}(\Zd)<1$ for every $d\ge 2$. 
\end{theor}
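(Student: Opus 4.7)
The plan is to show $p_c^{\text{Is}}(\Zd)<1$ for $d\ge 2$ by proving almost-sure fixation at $+$ whenever $p$ is sufficiently close to $1$, via a block renormalization combined with a subcritical site percolation comparison. Tessellate $\Zd$ into disjoint cubes of side $L$ with $L$ large, and call a cube \emph{sparse} if it contains at most $\eps L^d$ initial $-$ sites. For $p$ close to $1$, a Chernoff bound shows that a fixed cube fails to be sparse with probability at most $e^{-cL^d}$; a Liggett-Schonmann-Stacey domination then gives that the set of non-sparse cubes is stochastically below a Bernoulli site percolation on the block lattice whose parameter can be made arbitrarily small by taking $L$ large. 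In particular the non-sparse cubes are subcritical, so almost every site lies in arbitrarily large balls of sparse cubes.

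The second and harder ingredient is a \emph{dynamical} statement: there exists a deterministic $T=T(L)$ such that, with probability tending to $1$ as $L\to\infty$, if a sufficiently large concentric ball of cubes around a site $v$ is entirely sparse, then $\sigma_t(v)=+$ for every $t\ge T$. Heuristically, inside a sparse region the $-$ sites form small islands bordered by $+$; because $d\ge 2$, the zero-temperature Glauber rule flips a boundary $-$ site at rate $\Theta(1)$ as soon as a majority of its neighbors is $+$, producing a surface-tension effect that erases islands faster than they can grow. Rigorously one controls this by a spacetime Peierls estimate, bounding by $e^{-c\ell}$ the probability that the genealogy of flips traced back from $v$ contains a $-$ contour of total spacetime length $\ell$, and summing over contours.

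Combining the two ingredients and iterating across a geometrically increasing sequence of block scales yields, via Borel-Cantelli, that every site eventually fixates at $+$ almost surely, so $p_c^{\text{Is}}(\Zd)<1$. The main obstacle is the dynamical step: because the dynamics are non-monotone (sites flip in both directions), one cannot directly invoke bootstrap percolation or FKG-based comparison arguments to preclude the growth of $-$ regions, as one could for the corresponding monotone bootstrap model. The spacetime Peierls analysis is the technical heart of the Fontes-Schonmann-Sidoravicius argument and is the step I expect to require the most care; the dimensional assumption $d\ge 2$ enters precisely here, through the positive surface tension that is unavailable in $d=1$.
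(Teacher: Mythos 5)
First, note that this paper does not prove Theorem \ref{VladasT}: it is cited from \cite{FSS02}, and the present paper reuses the FSS multi-scale framework in its own proofs (Section \ref{SectioninsideR}, Theorem \ref{stronger}). So your proposal has to be compared against the FSS argument itself, whose skeleton is displayed in the proof sketch of Theorem \ref{stronger}.

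Your renormalization outline (coarse-grain into blocks, show blocks are typically sparse, iterate, Borel--Cantelli) has the right shape, and the ``surface-driven erosion of $-$ islands'' heuristic is apt. But the central mechanism you propose is not the one FSS use, and the step you declare impossible is in fact the heart of their argument. You write that ``because the dynamics are non-monotone\dots one cannot directly invoke bootstrap percolation\dots comparison arguments.'' FSS's key observation is that one \emph{can}: under a clock-ring coupling, the set of $-$ sites in zero-temperature Glauber is dominated at every time by the (monotone) bootstrap process in which $+$ sites flip to $-$ according to the same majority rule but $-$ sites never flip back. This is a one-line induction on flip events and requires no FKG. It converts the non-monotone Glauber dynamics into a monotone growth problem, to which Aizenman--Lebowitz and extremal-type bounds apply, showing that bootstrap-covered $-$ droplets inside a block $B_k$ are small with overwhelming probability; a separate, again essentially monotone, erosion estimate then removes those small droplets in polynomial time, and a path-of-clock-rings bound controls the influence from outside $B_k$. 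There is no ``spacetime Peierls estimate'' or contour summation in FSS, and it is not clear that such an argument could even be set up here, since sites may flip between $+$ and $-$ arbitrarily many times and there is no Gibbsian energy/entropy balance to control the spacetime contours. Finally, the FSS scales $B_k$ and $T_k$ grow doubly exponentially (exactly as in Section \ref{SectioninsideR}), rather than via a single fixed block size $L$ followed by one Borel--Cantelli. In short, you have the right renormalization strategy, but you have replaced the actual technical core---bootstrap domination of the $-$ set plus polynomial-time droplet erosion---with a contour scheme that neither appears in FSS nor obviously works.
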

Moreover, the authors in \cite{FSS02} showed that this fixation occurs in time with a stretched exponential tail.
Morris \cite{Morris09} combined this theorem with techniques from high dimensional
bootstrap percolation (see Section \ref{bpf}) to prove that $p_c^{\text{Is}}(\Zd)\to 1/2$ as $d\to\infty$.

Another related result for the symmetric case $p=1/2$ (which corresponds to an initial quench from infinite temperature) is due to Nanda, Newman and Stein \cite{NNS00}.
They proved that in two dimensions, every vertex almost surely changes state an infinite number of times; however, it is still unknown if the same holds for higher dimensions.

These dynamics have also been considered in other lattices. For instance, 
Damron, Kogan, Newman and Sidoravicius \cite{DKNS13} considered slabs of the form
$\mathbbm S_k:=\Zdd\times\{0,1,\dots,k-1\}$ ($k\ge 2$) with the family $\mathcal N_3^3$.
They proved a classification theorem which, surprisingly, holds for {\it all} $p\in(0,1)$
and, in particular implies that $\mathbbm S_k$ does not fixate at $+$ (however, each single
vertex in $\mathbbm S_2$ fixates at either $+$ or $-$).
Therefore, in this particular setting, which interpolates between dimensions 2 and 3 (so Theorem \ref{VladasT} does not apply),
the critical probability is 1 and there is no phase transition for fixation at $+$.

Let us now consider a general family $\U$ in dimension $d=2$.
For each $u\in S^1$ (the unit circle) we write $\dhp:=\{x\in\Zdd:\langle x,u\rangle<0\}$ 
 for the discrete half-plane whose boundary is perpendicular to $u$.
 In their groundbreaking work on general models of monotone cellular automata,
 Bollob\'as, Smith and Uzzell \cite{BSU15}  made the following important definitions. 
\begin{defi}\label{crit} The set $\Ss$ of {\it stable directions} is
\[\Ss=\Ss(\U):=\{u\in S^1: X\not\subset\mathbbm H_u,\ \forall X\in\U\}.\]
We say that $\U$ is {\it critical} if there exists a semicircle in $S^1$ that has finite intersection with $\Ss$,
and if every open semicircle in $S^1$ has non-empty intersection with $\Ss$. 
\end{defi}
For example, the family $\U=\mathcal N_2^2$ is critical, since $\mathcal S(\mathcal N_2^2)=\{\pm e_1,\pm e_2\}$.
Morris \cite{Morris17} conjectured that some of the known results about the family $\mathcal N_2^2$ can be extended to the general setting of critical models. For instance, the existence of the phase transition proved by Fontes, Schonmann and Sidoravicius \cite{FSS02}; it has been conjectured that such a transition is sharp and occurs at $p=1/2$. Moreover, the same result proved by Nanda, Newman and Stein \cite{NNS00} should hold. More precisely, he conjectured the following.
\begin{conj}\label{Co2}
For every critical two-dimensional family $\UU$, it holds that
\[p_c^{\textup{Is}}(\Zdd,\UU)<1.\]
\end{conj}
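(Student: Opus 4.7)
The plan is to mimic the multi-scale renormalization argument of Fontes--Schonmann--Sidoravicius \cite{FSS02} for $\mathcal N_2^2$, replacing their axis-aligned rectangles by polygonal droplets whose outward normals are stable directions of $\UU$. Criticality of $\UU$ is exactly what furnishes the bounded polygons needed for this.

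\textbf{Step 1 (stable droplets).} Since $\UU$ is critical, every open semicircle of $S^1$ meets $\Ss(\UU)$. I choose finitely many directions $u_1,\ldots,u_k\in\Ss(\UU)$ whose negatives $-u_1,\ldots,-u_k$ span $S^1$, and let $D$ be the bounded convex polygon with outward normals $-u_1,\ldots,-u_k$. The key rigidity claim is: for $L$ large (compared to $\max_{X\in\UU}\mathrm{diam}(X)$), if $\sigma_t\equiv +$ on $LD\cap\Zdd$ with arbitrary configuration outside, then no vertex strictly inside or on the non-corner part of the boundary of $LD$ can flip to $-$. Indeed, a hypothetical flip at a boundary vertex on the face with outward normal $-u_i$ would force the flipping rule $X\in\UU$ to be entirely in the outward cone at that point, which for interior-of-face vertices reduces to $X\subset\mathbbm H_{u_i}$, contradicting stability of $u_i$. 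The corners of $D$ can be handled by padding the list $\{u_i\}$ with extra stable directions.

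\textbf{Step 2 (coarse-graining).} Tile $\Zdd$ by blocks of side $L$ chosen so that each block contains a copy of $LD$ plus a safety annulus of width $R>\max_{X\in\UU}\mathrm{diam}(X)$. A block is \emph{good} if every site in it is initially $+$, otherwise \emph{bad}. As $p\to 1$ the good-block density tends to $1$, so for $p$ close enough to $1$ a Peierls-type estimate on the coarse-grained lattice gives that almost surely every connected bad component is finite and is surrounded by a connected ring of good blocks. By Step 1 the $+$ content of each such ring is locked in for all time.

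\textbf{Step 3 (fixation inside).} For each finite bad component $\Gamma$, the frozen $+$ ring makes the dynamics inside $\Gamma$ a continuous-time finite Markov chain on $\{+,-\}^\Gamma$ with the all-$+$ configuration absorbing. It remains to show reachability of all-$+$ from any initial configuration. I argue by onion-peeling inward from the ring: provided the ring is thick enough relative to $\max_{X\in\UU}\mathrm{diam}(X)$, each outermost $-$ site of $\Gamma$ has some rule-translate $v+X$ lying entirely in the $+$ region and hence flips to $+$ at a positive rate; the spanning property of $\{-u_i\}$ around $S^1$ ensures at least one such rule-translate exists. Iterating gives a positive-probability path to all-$+$, so the chain is absorbed almost surely in finite expected time. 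Borel--Cantelli over all finite bad components then yields global fixation at $+$, so $p_c^{\text{Is}}(\Zdd,\UU)\le p<1$.

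\textbf{Main obstacle.} The subtlest point is the reachability claim in Step 3 for an arbitrary critical $\UU$. For $\mathcal N_2^2$ the rules are so small and symmetric that finite $-$ clusters visibly shrink, but for a general critical $\UU$ the rules may have awkward shapes, and one has to exploit the fact that the chosen stable directions span every semicircle to produce, at each outermost $-$ site of $\Gamma$, a rule-translate fully inside the surrounding $+$ zone. There is a real geometric tension here: the same stability that protects $LD$ from being eaten (no rule strictly ``outward'' in a stable direction) also obstructs the naive attempt to flip $-$ to $+$ purely via outward rules, so the peeling has to use rules with mixed orientations --- which works precisely because the peeling proceeds outside-in inside a thickened ring. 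Finally, $L$, $R$ and $1-p$ must be jointly tuned to make Steps 1--3 compatible, which is what forces the conclusion to be $p_c^{\text{Is}}<1$ rather than a sharp threshold.
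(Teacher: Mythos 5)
Your proposal has a genuine gap, and it sits exactly where the difficulty of this problem lives: the ``rigidity'' claim of Step 1 and the resulting ``locked in for all time'' assertion of Step 2 are false. A convex region of $+$'s whose faces have outward normals $-u_i$ with $u_i\in\Ss$ is protected only against a $-$\emph{ half-plane front} advancing across a face; it is not protected at its corners, and once a corner flips the configuration inside is no longer all $+$, so your Step 1 statement no longer applies and the invasion can propagate along the face. Concretely, take $\UU=\mathcal{N}_2^2$, whose stable set is $\{\pm e_1,\pm e_2\}$, and a square of $+$'s in a sea of $-$'s: the top-right corner site has its north and east neighbours in state $-$, so the rule $\{e_1,e_2\}$ disagrees with it and it flips at rate $1$; then the next site on the top row has two $-$ neighbours, and so on, so the $-$'s can eat the square row by row even though all four face normals are stable. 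Your proposed fix (``padding with extra stable directions'') is impossible here, since $\Ss(\mathcal{N}_2^2)$ has only four elements. More generally, for a critical family no finite $+$ island is deterministically stable, and the set of sites that are ever $-$ is bounded only by the $\UU$-bootstrap closure of the initial $-$ set, which for a critical family and any $q=1-p>0$ is almost surely all of $\Zdd$. Hence no static, one-scale Peierls/blocking structure (good rings freezing bad components, Step 3's finite Markov chain with frozen $+$ boundary) can exist: fixation at $+$ is the outcome of a \emph{race} between erosion of $-$ droplets by the $+$'s and growth/coalescence of those droplets, not of a geometric obstruction.

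This is also why the statement you set out to prove is stated as a conjecture (Conjecture \ref{Co2}) and is not proved in the paper in this generality: Theorem \ref{Main2} establishes it only under the additional ``Ising-eroding'' hypothesis, i.e.\ a quantitative bound showing that a $(\mathcal T,L)$-droplet of $-$'s with frozen $+$ exterior is erased in polynomial time with stretched-exponentially small failure probability, and the paper explicitly notes that such erosion bounds are not known for the Ising dynamics of general critical families (which is the reason it works with the $\UU$-voter dynamics, where a martingale argument yields them under the fair-direction condition). Given such an erosion estimate, the proof then still requires the Fontes--Schonmann--Sidoravicius multiscale renormalization: covering the $-$'s by droplets via the Aizenman--Lebowitz and extremal lemmas, showing at each scale that the droplets present are small enough to be eroded within the allotted time window, and separately controlling the influence from outside the block by a path-of-clock-rings estimate. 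Your soft argument bypasses all of this, and in doing so it would prove an open conjecture with no quantitative input; the $\mathcal{N}_2^2$ corner example shows it cannot be repaired at the level of Steps 1--2.
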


\begin{conj}
If $\U$ is a critical two-dimensional family and $p=1/2$, then almost
surely every vertex changes state an infinite number of times.
\end{conj}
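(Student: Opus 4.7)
The plan is to adapt the Nanda--Newman--Stein strategy \cite{NNS00} to the critical $\UU$-Ising dynamics, combined with droplet-type methods from $\UU$-bootstrap percolation. Let $F^+$ (resp.\ $F^-$) denote the random set of sites that eventually fixate at spin $+$ (resp.\ $-$), and let $N:=\Zdd\setminus(F^+\cup F^-)$ be the set of non-fixating sites. The aim is to show $\Prob_{1/2}(0\in N)=1$. By the $+/-$ exchange symmetry of the initial Bernoulli$(1/2)$ measure and the dynamics, together with translation ergodicity of the joint law, the asymptotic densities of $F^+$ and $F^-$ are a.s.\ equal to a common deterministic constant $q$, and one must show $q=0$.

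Next I would extract a local stability condition. Assume for contradiction $q>0$. If $v\in F^+$ then for every $X\in\UU$ we should have $(v+X)\not\subset F^-$, and symmetrically for $F^-$. Indeed, after $v$ has fixated at $+$, at each of its (infinitely many) subsequent clock rings no $X\in\UU$ can disagree with $v$; hence for every $X\in\UU$ there exist arbitrarily late times at which $v+X$ contains at least one site in state $+$, and such a site cannot lie in $F^-$ once the latter has itself fixated. When moreover $N=\emptyset$ this upgrades to full $\UU$-closedness, $(v+X)\cap F^+\ne\emptyset$, so both $F^+$ and $F^-$ become disjoint $\UU$-closed sets of equal positive density.

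The contradiction is then to be extracted from the stability condition together with the criticality of $\UU$. Since $\Ss(\UU)$ meets every open semicircle, for each large scale $L$ one can build a convex polygonal ``droplet'' $D_L$ whose boundary segments are perpendicular to directions in $\Ss(\UU)$ and which is internally $\UU$-closed: $v+X\subset D_L$ for every $X\in\UU$ and every $v$ sufficiently deep inside $D_L$. The strategy is then to run a multiscale / renormalisation argument in the spirit of the proof of Conjecture \ref{Co2}: by ergodicity one should locate, with positive probability, arbitrarily large translates of $D_L$ whose interior intersects $F^+$ while its ``boundary layer'' lies in $F^-$, forcing some $v\in F^+$ with $v+X\subset F^-$ for some $X\in\UU$ and contradicting stability. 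Repeating the argument with droplets in enough directions to span $S^1$ should rule out $q>0$.

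The main obstacle will be the presence of the non-fixating set $N$. The stability derived above only weakly separates $F^+$ from $F^-$, because a site of $N$ may be momentarily $+$ or momentarily $-$, and a large droplet could in principle be ``hidden'' inside $N$ rather than inside $F^-$. Upgrading the scheme therefore appears to require quantitative control on fixation times, in the spirit of the stretched-exponential tails used in Theorem \ref{VladasT}, to show that large droplets cannot be predominantly composed of $N$-sites. A further difficulty is that the classical axis-aligned contour analysis of NNS must be replaced by the polygonal droplet machinery of Bollob\'as--Smith--Uzzell \cite{BSU15} adapted to the geometry of $\Ss(\UU)$; handling the simultaneous two-sided $\UU$-stability of $F^+$ and $F^-$ (rather than the one-sided monotone growth typical of bootstrap percolation) is where essentially new ideas would be needed.
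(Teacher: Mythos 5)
This statement is a \emph{conjecture} (attributed to Morris, in analogy with Nanda--Newman--Stein \cite{NNS00}), and the paper does not prove it: the results actually established here (Theorems \ref{Main2} and \ref{Main}) concern the existence of a phase transition, $p_c<1$, for voter/Ising-eroding critical families, which is a different and logically independent question from non-fixation at $p=1/2$. So there is no ``paper's own proof'' to compare against, and your proposal must be judged as an attempt at an open problem.

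As such, your proposal is a reasonable high-level plan, and you are candid that it is a plan rather than a proof. Still, one step as stated is more than a ``gap'': it is wrong as written. After deducing that, on the event $N=\emptyset$, both $F^+$ and $F^-$ are $\UU$-closed sets of density $q=1/2$, you claim that ergodicity should let you locate large droplets with interior meeting $F^+$ and with boundary layer inside $F^-$, producing a site $v\in F^+$ and $X\in\UU$ with $v+X\subset F^-$. But two complementary $\UU$-closed sets of positive density, satisfying the two-sided stability condition, can perfectly well coexist: for instance, if some $\pm u\in\Ss(\UU)$, then $F^+=\mathbb H_{-u}$ and $F^-=\mathbb H_u$ (or a biinfinite stripe pattern with interfaces normal to stable directions) satisfies every local constraint you derived, has both densities equal to $1/2$, and admits no droplet of the type you seek. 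In other words, the stability condition alone does \emph{not} force a contradiction, and your ergodicity claim is asserted rather than argued; this is exactly the point where the NNS proof for $\mathcal N_2^2$ does something much sharper (a percolation/contour analysis of the three-way boundary between $F^+$, $F^-$ and $N$, not just a density count). You also cannot assume $N=\emptyset$ upfront, since $N$ having full measure is precisely the conclusion, and your ``quantitative control on fixation times'' escape route is not available here: the paper's stretched-exponential estimates hold only for $p>p_0$ near $1$, not at $p=1/2$. So the approach would need the hard part of NNS redone for general polygonal droplets, and your sketch does not yet contain that ingredient.
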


In this note, we prove that Conjecture \ref{Co2} holds for a subclass of critical families.
\begin{defi}\label{DefiDrop}
 Let $\mathcal T\subset \Ss$. A $\mathcal T$-{\it droplet} is a non-empty set of the form
 \[D=\bigcap_{u\in\mathcal T}(\dhp+a_u),\]
 for some collection $\{a_u\in\Zdd:u\in\mathcal T\}$. When $D$ is finite and its diameter (the maximum distance between two points in $D$)
 is $\le L$, we call $D$ a $(\mathcal T,L)$-{\it droplet}.
\end{defi}
We will always consider subsets $\mathcal T\subset \Ss$ 
such that $D$ is finite (for example, when $\U$ is critical we can choose at least one such $\mathcal T$).
Suppose for a moment that every vertex in a $(\mathcal T,L)$-droplet $D$ is
in state $-$,
and every vertex
outside $D$ is frozen in state $+$ (see Figure \ref{frozen});
when we run the $\U$-Ising dynamics, one might expect $D$ to become entirely filled with $+$ in polynomial time in $L$.
\begin{defi}\label{TdD}
 Let $D$ be a $(\mathcal T,L)$-droplet. 
Assume we start the process with
$D$ entirely occupied by states $-$, and all other states are $+$.
The {\it droplet erosion time} $T^{\textup{Is}}(D)$ is the first time when $D$ is fully $+$.
\end{defi}
\begin{figure}[ht]
	\centering
	\includegraphics[scale=.6]{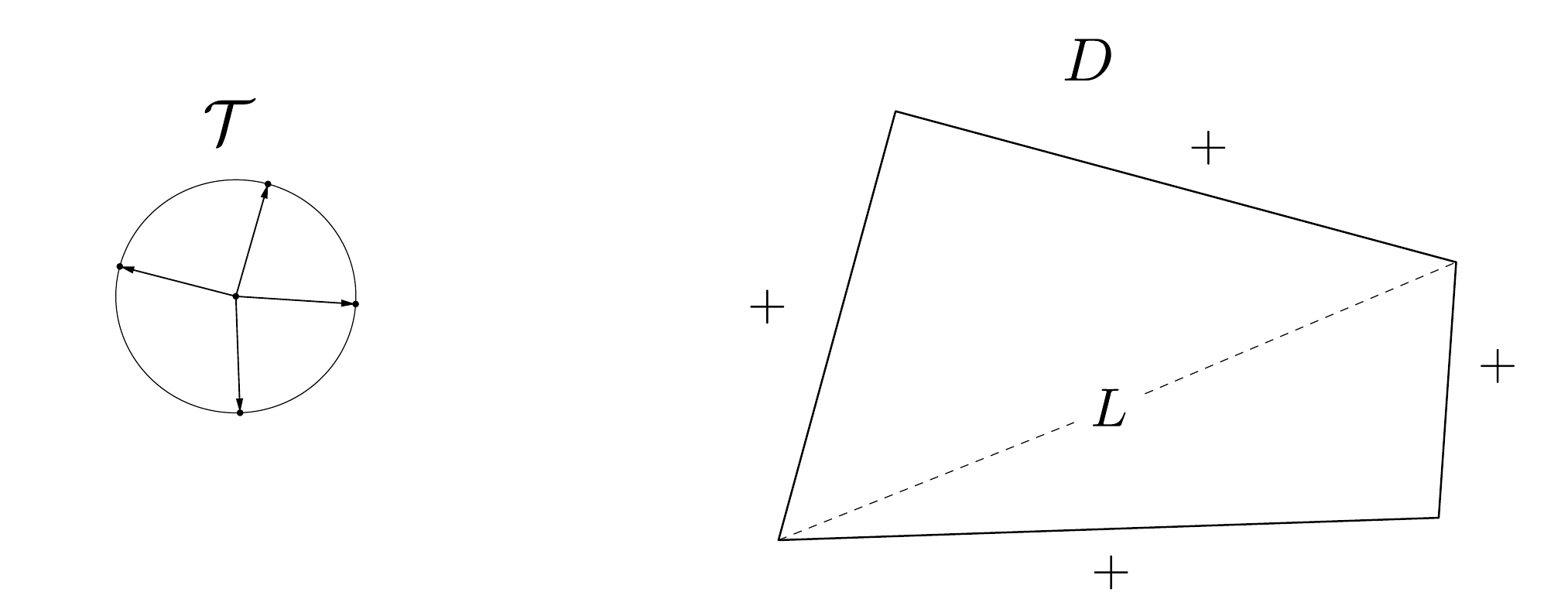}
	\caption{4 stable directions determining a $(\mathcal T,L)$-droplet.}
	\label{frozen}
\end{figure}

The droplet erosion time is well defined, because $\mathcal T\subset\Ss$, so
the states outside $D$ will never flip (see Figure \ref{frozen}), and eventually every state in $D$ will become $+$ forever.
\begin{defi}\label{polyerodii}
We say that $\UU$ is {\it Ising-eroding} if we can choose a constant $c>1$ and a finite set $\mathcal T\subset\Ss$, such that
 any $(\mathcal T,L)$-droplet $D$ satisfies
 \begin{equation}
  \p_p(T^{\textup{Is}}(D)> L^c)\le e^{- L},
 \end{equation}
 for all $L$ large enough. 
\end{defi}
The authors of \cite{FSS02} proved that $\mathcal N_2^2$ is $(2+\varepsilon,\mathcal S)$-eroding, for any fixed constant $\varepsilon>0$ and $\mathcal S=\Ss(\mathcal N_2^2)=\{\pm e_1, \pm e_2\}$.  Indeed, they proved that
\[ \p_p(T^{\textup{Is}}(D)> CL^2)\le e^{-\gamma L},\]
for some positive constants $C$ and $\gamma$, and all $L$ large enough.  Moreover, numerical simulations suggest the following conjecture to be true, which  seems hard to prove.
\begin{conj}\label{mia}
 Every critical family is Ising-eroding.
\end{conj}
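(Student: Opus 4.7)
The plan is to extend the Fontes--Schonmann--Sidoravicius argument for $\mathcal N_2^2$ to an arbitrary critical family $\UU$. Using criticality, I first fix a finite $\mathcal T\subset\Ss$ whose directions meet every open semicircle, so that every $(\mathcal T,L)$-droplet is a bounded convex polygon with at most $|\mathcal T|$ sides. The stability condition $X\not\subset\mathbbm H_u$ for each $u\in\mathcal T$ immediately implies that no $+$ vertex outside $D$ can ever flip: if $v\notin D$ then $v\notin\mathbbm H_{u_0}+a_{u_0}$ for some $u_0\in\mathcal T$, so $v+X\subset D$ would force $X\subset\mathbbm H_{u_0}$, contradicting $u_0\in\Ss$. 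Hence $D$ is confined by the $+$ exterior, and one only needs to bound the time for every interior vertex of $D$ to reach $+$.

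The core of the argument is a \emph{corner erodibility} claim: at each corner $v$ of $D$, where two adjacent stable directions $u_i,u_{i+1}\in\mathcal T$ meet, there should exist a rule $X\in\UU$ together with a fixed pattern of $+$-sites in the exterior wedge near $v$ such that $v$ flips to $+$ in time $O(1)$ uniformly in $L$. Given this, one iterates as in \cite{FSS02}: once a corner vertex has flipped, its neighbor along the edge becomes a new corner with the same local geometry and so flips in $O(1)$ time, sweeping an entire edge in time $O(L)$ with high probability. After each sweep $D$ shrinks by one row on that side; iterating $O(L)$ times per side and over the $|\mathcal T|$ sides yields a total erosion time of order $L^c$ with $c$ depending on $|\mathcal T|$. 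The oscillations inside $D$, where $-$ sites can temporarily flip to $+$ and back, are controlled by the standard FSS block renormalization and a comparison with supercritical Bernoulli percolation, which upgrades the high-probability bound to the $e^{-L}$ tail.

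The main obstacle, and the reason I view the conjecture as genuinely hard, is the corner erodibility claim. For $\mathcal N_2^2$ it is immediate because the rule $\{e_1,e_2\}$ fits in the exterior wedge at the NE corner of any axis-aligned rectangle; but for a general critical $\UU$ there may simply be no $X\in\UU$ lying in the wedge opposite a given corner, and criticality alone does not obviously supply one. A natural attempt is to enlarge $\mathcal T$ by adding stable directions close to $u_i$ and $u_{i+1}$ so as to ``round'' each corner into a small polyhedral region whose outward exterior is rich enough that some $X\in\UU$ must be contained in it, exploiting that $\Ss$ has finite intersection with some semicircle. Making this precise while keeping the corner-flip rates uniform in $L$ seems to require a new geometric input into the structure of critical families, which is where the real difficulty of the conjecture lies.
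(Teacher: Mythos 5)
This statement is labelled a conjecture in the paper and the paper offers no proof of it; indeed the text surrounding Conjecture~\ref{mia} says explicitly that it ``seems hard to prove,'' and the whole point of Sections~\ref{Section1da}--\ref{SectionWrapup} is to sidestep the difficulty by working with the $\UU$-voter dynamics instead, where the per-rule rate $r_v(\eta)/m$ provides a quantitative bias that can be turned into a Lyapunov-function argument (Lemma~\ref{pairfeps}, Proposition~\ref{gluing}). Your proposal is not a proof, and to your credit you do not claim it is: you flag the ``corner erodibility'' claim as the missing ingredient. That diagnosis is sound as far as it goes, but it is not the only gap, and I want to point out a second one that your sketch glosses over.

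Even if one grants corner erodibility (some $X\in\UU$ sits in the exterior wedge at each corner of every $\mathcal T$-droplet), the remaining step --- ``sweeping an edge in $O(L)$ time and iterating'' --- is not straightforward in the Ising dynamics. The difficulty is that the $\UU$-Ising flip rule is existential: $v$ flips as soon as \emph{any single} $X\in\UU$ disagrees with it. So once a boundary $-$ site has flipped to $+$, it can flip back to $-$ the moment one rule sees an all-$-$ translate, and for a general critical family there may be many more rules pulling a boundary site back to $-$ than pulling it to $+$. There is no counting of rules to exploit, which is exactly the structure the paper uses in the voter case (inequality~\eqref{presym} and the function $f$ of Proposition~\ref{gluing}). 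For $\mathcal N_2^2$, Fontes--Schonmann--Sidoravicius control these oscillations by a delicate comparison of the moving interface with a system of coupled random walks, not by ``block renormalization and comparison with supercritical Bernoulli percolation'' as you write; the block renormalization is their machinery for the global fixation theorem, not for the droplet-erosion lemma. Extending the random-walk interface argument beyond $\mathcal N_2^2$ is precisely what is not known, and it is independent of the geometric corner issue you raise. So a correct account of the difficulty has two parts: (i) the geometric one you identified, and (ii) the dynamical one --- no quantitative bias in the Ising rates to drive an interface argument --- and (ii) is arguably the more fundamental obstruction, since it is the one the paper resolves (in the voter setting) by Proposition~\ref{gluing} and cannot resolve in the Ising setting.

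Finally, two smaller inaccuracies. Your claim that the sweep mechanism yields $L^c$ with $c$ ``depending on $|\mathcal T|$'' does not match the FSS bound of $L^{2+\eps}$ for $\mathcal N_2^2$, where the exponent comes from random-walk hitting times and not from the number of sides. And your assertion that stability of $u_0$ forces $v+X\subset D\Rightarrow X\subset\mathbbm H_{u_0}$ for $v$ outside $D$ is fine in spirit but should be stated more carefully: what matters is that a $+$ vertex $v$ outside $D$ cannot see an all-$-$ translate $v+X$ because $v+X$ would have to lie in $D\subset\mathbbm H_{u_0}+a_{u_0}$, which contradicts $X\not\subset\mathbbm H_{u_0}$ only after you account for the shift by $v$; you are implicitly using that $\langle v,u_0\rangle\ge\langle a_{u_0},u_0\rangle$. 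This is repairable but should be made precise if you want the confinement claim to stand on its own.
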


Our main theorem states that 
there exists a phase transition for {\it some} critical families
(Conjecture \ref{mia} would imply it for {\it all} critical families).

\begin{theor}\label{Main2}
If $\UU$ is a Ising-eroding critical two-dimensional family, then
 \begin{equation}
  p_c^{\textup{Is}}(\Zdd,\UU)<1.
 \end{equation}
\end{theor}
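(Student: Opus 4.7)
The plan is to adapt the multi-scale renormalization of Fontes, Schonmann and Sidoravicius to general critical families, with the Ising-eroding hypothesis furnishing the local erosion estimate at each scale. Fix the constant $c>1$ and the finite set $\mathcal T\subset\Ss$ from Definition~\ref{polyerodii}, and choose a fast-growing sequence of scales $L_n$ (say $L_{n+1}=L_n^{\alpha}$ for some $\alpha>c$) together with a nested hierarchy of tilings of $\Zdd$ by $(\mathcal T,L_n)$-droplets. Fix $p$ close to $1$ and an arbitrary $v\in\Zdd$; the goal is to show that $\p_p[v\text{ fixates at }+]=1$.

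\textbf{Step 1: hierarchy of good and bad droplets.} Call a scale-$0$ droplet bad if it contains any $-$ at time $0$, which occurs with probability $O(L_0^2(1-p))$. Inductively, declare a scale-$(n+1)$ droplet $D$ bad if the bad scale-$n$ sub-droplets it contains form a $\mathcal T$-adjacent chain linking the interior of $D$ to its boundary, so that their joint erosion could destabilize the belt around $D$. A Peierls-type count over such chains, whose number grows only polynomially in $L_{n+1}/L_n$, then shows by induction that for $1-p$ small enough $\p_p[D\text{ is bad at scale }n]$ decays at least geometrically in $n$, uniformly in $D$.

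\textbf{Step 2: erosion inside a good droplet.} On the event that the scale-$n$ droplet containing $v$ is good, a bottom-up induction on scales shows that this droplet becomes entirely $+$ by time $T_n:=\sum_{k\le n}L_k^c\le 2L_n^c$ and that the belt of $+$'s immediately surrounding it persists throughout. The point is that in a good scale-$(k+1)$ droplet the bad scale-$k$ sub-droplets are, by the chain hypothesis, separated by more than the diameter of $\UU$, so the $+$-belts constructed around each of them at scale $k$ cannot be flipped by neighbors; the Ising-eroding property then applies verbatim to each bad sub-droplet and erases it in time $L_k^c$ with probability at least $1-e^{-L_k}$.

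\textbf{Step 3: Borel--Cantelli.} The probabilities of bad events at scale $n$---both the structural event from Step~1 and the erosion-failure event from Step~2---are summable, so a.s.\ only finitely many occur. Hence a.s.\ there is a random scale at which $v$ lies in a good droplet whose erosion succeeds, and from that time onward $v$ remains in $+$. A countable union over $v\in\Zdd$ yields fixation at $+$ with probability $1$, proving $p_c^{\textup{Is}}(\Zdd,\UU)\le p<1$.

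The hard part is Step~2: controlling the stability of the $+$-belt around each bad sub-droplet throughout the full erosion time. The schedule $L_{n+1}=L_n^\alpha$ with $\alpha>c$ is chosen so that the time budget $L_n^c$ at scale $n$ comfortably dominates $\sum_{k<n}L_k^c$, but the subtle issue is that the definition of ``bad'' at scale $n+1$ must exclude configurations in which two bad scale-$n$ sub-droplets lie within the range of a single $X\in\UU$, otherwise the belt of one could be destroyed by the other before erosion completes. Encoding this spacing condition in a form still amenable to the Peierls bound of Step~1 is what forces the use of chains rather than mere density of bad sub-droplets, and is the technical heart of the argument.
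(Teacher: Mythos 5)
Your proposal is in the FSS spirit, but it has a genuine gap that you partially acknowledge at the end, and the gap is not merely technical: it changes the architecture of the argument. In Step~2 you claim that inside a good scale-$(k+1)$ droplet the bad scale-$k$ sub-droplets are ``separated by more than the diameter of $\UU$,'' so each can be eroded independently with all surrounding states held at $+$. But the definition of ``bad'' you gave in Step~1 only forbids a chain of bad sub-droplets linking the interior to the boundary; two (or several) adjacent bad sub-droplets sitting in the interior of a good parent are permitted, and such a cluster is neither a $\mathcal T$-droplet to which Definition~\ref{polyerodii} applies nor a configuration in which each component is surrounded by states that provably never flip. You then say the fix is to encode a spacing condition into the notion of bad while keeping Step~1's counting, and call this ``the technical heart of the argument'' — correctly, but it is precisely the part you have not done, and it is not obvious it can be done with a Peierls count over chains: declaring a parent bad as soon as two bad children are within range of some $X\in\UU$ changes the recursion for the bad probability in a way your estimate does not address. (Your Step~1 claim that the number of relevant chains grows ``only polynomially in $L_{n+1}/L_n$'' is also off; the count at fixed length $\ell$ is exponential in $\ell$, and a proper Peierls bound is needed.)

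The paper's route is different in exactly the places where you are stuck, and that is worth noting. First, the renormalization is done on square blocks $B_k$, not on a droplet tiling, together with an auxiliary \emph{block-dynamics} that resets each $B_k$ to monochromatic at time $T_k$ and is more generous to the $-$ state; this removes any need to propagate a delicate geometric structure across scales. Second — and this is the key replacement for your chain/spacing condition — the paper bootstraps the $-$ sites inside a block and runs the BSU covering algorithm (Definition~\ref{covalgo}), which by construction outputs a family of droplets that are pairwise at distance $>\kappa$; the Aizenman--Lebowitz lemma (Lemma~\ref{AL}) and the extremal lemma (Lemma~\ref{EL}) then give the probability bound (Lemma~\ref{lema3}) on large covered droplets. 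So the well-separation you need is automatic, and the droplets produced genuinely dominate the set of sites that can ever be $-$, which is what licenses applying the eroding hypothesis to each of them (Lemma~\ref{lema3'}). Third, the paper separately controls the influence from outside the block via a path-of-clock-rings estimate (Lemma~\ref{lema4}), another ingredient your proposal lacks. Finally, the paper's scale sequence is not $L_{n+1}=L_n^\alpha$ but the FSS-style doubly-exponential recursion $q_k=\exp(-a/q_{k-1})$, $l_k=\lfloor(1/q_{k-1})^{3c}\rfloor$, which is what makes the inductive probability bound $\hat q_k\le q_k$ (Proposition~\ref{lema1}) close. To repair your proposal you would essentially need to replace your chain-based definition of bad with the covering algorithm's output and add the outer-influence control; at that point you would have rediscovered the paper's proof of Theorem~\ref{stronger}, from which Theorem~\ref{Main2} follows by the same Borel--Cantelli step the paper uses for Theorem~\ref{Main}.
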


We do not know how to prove that $(\mathcal T,L)$-droplets can be eroded in polynomial time in the sense of Definition \ref{polyerodii}.
For this reason we instead focus on the $\UU$-voter model where, as we will see, there is an additional bias in favor of the leading state that we will be able to exploit (see Proposition \ref{erosion1}).

\subsection{The $\UU$-voter dynamics}\label{Uvoter}
\begin{defi}\label{Uvoterdyn}
Let $\UU=\{X_1,\dots,X_m\}$ be an arbitrary finite family of finite subsets of $\Zd\setminus \{0\}$.
 The {\it $\UU$-voter dynamics} on $\Zd$ with states $+$ and $-$ are defined as follows:
 \end{defi}
 \begin{enumerate}
  \item[(a)] Every $v\in\Zd$ has an independent exponential random clock with rate 1.
  \item[(b)] When the clock at $v$ rings, 
  the vertex $v$ {\it chooses} $X\in\UU$ uniformly at random. If the set $v+X$ is entirely in state $+$ (resp. $-$), then the state of $v$ updates to $+$ (resp. $-$). Otherwise nothing happens.
 \end{enumerate}
 
Observe that in this case, the rule $X\in\U$ is chosen at random with probability $1/m$, this is the difference between the $\U$-Ising and $\U$-voter dynamics.

For example, when $\UU=\UU(V):=\{\{x\}:x\in V\}$ for some finite set $V\subset \Zd\setminus\{0\}$, in (b)
the vertex $v$ chooses some $x\in V$ independently with probability $1/|V|$, and then
vertex $v$ immediately adopts the same state as $x$; this is usually called a
{\it linear voter model}. 
Of particular interest is the case where $V$ consists of all $2d$ unit vectors in $\Zd$. For related results see \cite{Liggett99} and references therein.

The generator $\mathbb V$ of this Markov process acts on local functions $f$ as
\[\mathbb V f(\sigma)=\sum_{v\in\Zd}\frac{r_v(\sigma)}{m}[f(\sigma^v)-f(\sigma)],\]
here $r_v(\sigma)$ denotes the number of rules disagreeing with vertex $v$ when the current configuration
is $\sigma$.
Observe that we have symmetry with respect to the interchange of the roles of $-$s and $+$s for these dynamics, and the system is monotone, namely,
$r_v(\sigma)$ is increasing in $\sigma$ when $\sigma(v)=-$ and decreasing in $\sigma$ when $\sigma(v)=+$.

Let $p_c^{\textup{vot}}(\Zd,\UU)$  be the {\it critical probability} of the $\UU$-voter dynamics on $\Zd$
\begin{equation}
 p_c^{\textup{vot}}(\Zd,\UU):=\inf\left\{p:\p_p(\text{$\UU$-voter dynamics fixate at }+)=1\right\}.
\end{equation}

 We remark that the families $\UU(V)$ described above are not critical and, in fact,
 their dynamics do not fixate at $+$ (unless $p=1$). For instance, if $V$ consists of all $2d$ unit vectors and $d\ge 2$, then almost surely
 \[\frac{\int_0^t{\bf 1}\{\sigma_s(v)=-\}\,ds}{t}
\to 1-p,\] 
 as $t\to\infty$ (see \cite{CG83});
 but if fixation at $+$ occurred then this ratio should converge to $0$.
 However, critical families exhibit a behavior substantially different from that of $\UU(V)$.

Now, given a $(\mathcal T,L)$-droplet $D$, assume that we start the $\U$-voter dynamics with $D$ entirely occupied by states $-$, and all other states are $+$. The {\it voter erosion time} $T(D)$ is the first time when $D$ is fully $+$.
\begin{defi}\label{chuloenbola}
We say that $\UU$ is {\it voter-eroding} if there exist $c>1$ and $\mathcal T\subset\Ss$, such that
 any $(\mathcal T,L)$-droplet $D$ satisfies
 \begin{equation}\label{polyerod}
  \p_p(T(D)> L^c)\le e^{- L},
 \end{equation}
 for all $L$ large enough. We say moreover that $\UU$ is $(c,\mathcal T)$-{\it eroding}.
\end{defi}
 The following is our main result.
\begin{theor}\label{Main}
If $\UU$ is a voter-eroding critical two-dimensional family, then
 \begin{equation}
  p_c^{\textup{vot}}(\Zdd,\UU)<1.
 \end{equation}
\end{theor}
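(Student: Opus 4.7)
The plan is to adapt the multi-scale renormalization argument of Fontes, Schonmann, and Sidoravicius \cite{FSS02} (the one used to prove Theorem \ref{VladasT}) to the general setting, replacing their explicit polynomial erosion estimate for $\mathcal N_2^2$ with the voter-eroding hypothesis. Fix $c>1$ and a finite $\mathcal T\subset\Ss$ such that $\UU$ is $(c,\mathcal T)$-eroding. Since $\UU$ is critical, $\mathcal T$-droplets are finite, and a short geometric lemma provides a constant $C_0=C_0(\mathcal T)$ with the property that every finite $S\subset\Zdd$ of diameter at most $L$ is contained in some $(\mathcal T,C_0L)$-droplet. Choose an exponent $\alpha>c$ and define scales $L_{k+1}:=\lceil L_k^{\alpha}\rceil$ starting from a large base $L_0$.

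The first step is a hierarchical geometric/probabilistic construction. Starting from the random initial set of $-$ sites, recursively define disjoint families $\mathcal{D}_0,\mathcal{D}_1,\ldots$ of $(\mathcal T,L_k)$-droplets: $\mathcal{D}_0$ consists of singleton droplets around each initial $-$ site, and $\mathcal{D}_{k+1}$ is built by iteratively merging any collection of droplets in $\mathcal{D}_k$ that jointly fit inside a common $(\mathcal T,L_{k+1})$-droplet, until no further merges are possible. By construction the droplets in $\mathcal D_{k+1}$ are pairwise disjoint and well separated. A standard inductive union bound, exploiting independence of the initial Bernoulli configuration and the super-polynomial growth of scales, yields that for $q:=1-p$ sufficiently small,
\begin{equation}\label{eq:plan-decay}
\Pr\Bigl(v\in\bigcup\mathcal{D}_k\Bigr)\le e^{-L_k}
\end{equation}
for every $v\in\Zdd$ and every $k\ge 0$, the base case coming directly from $\Pr(\sigma_0(v)=-)=q$ and the inductive step from the fact that producing a level-$(k+1)$ droplet forces many well-separated level-$k$ droplets to co-exist inside a ball of radius $O(L_{k+1})$.

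The second step is the dynamical cascade. Set $T_k:=L_k^{c}$. By the observation following Definition \ref{TdD} (whose argument, based on $\mathcal T\subset\Ss$, applies verbatim to the $\UU$-voter dynamics) the complement of a $\mathcal T$-droplet is preserved in state $+$ by the dynamics whenever it starts that way. Therefore, for each $D\in\mathcal D_k$ the voter-eroding hypothesis of Definition \ref{chuloenbola} implies that if the dynamics were run with $D$ initially $-$ and the entire complement $+$, then $D$ would be fully $+$ by time $T_k$ except with probability at most $e^{-L_k}$. To transfer this to the true dynamics I would use a finite-propagation estimate for $\UU$-voter dynamics: since the update rules in $\UU$ are of bounded range, a standard Poisson-chain calculation bounds the probability that information propagates a distance $\Theta(L_{k+1})$ within time $T_k$ by a quantity much smaller than $e^{-L_k}$, so the dynamics around $D$ on $[0,T_k]$ is coupled to the isolated ``island'' dynamics of Definition \ref{chuloenbola}. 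Combined with monotonicity of the $\UU$-voter dynamics (noted after Definition \ref{Uvoterdyn}) to handle $+$ sites that may already be present inside $D$, this shows that each $D\in\mathcal D_k$ is fully $+$ by time $\sum_{j\le k}T_j$ with failure probability summable in $k$. Borel--Cantelli together with \eqref{eq:plan-decay} then yields that a.s.\ every vertex belongs to only finitely many $\mathcal{D}_k$ and fixates at $+$, giving $p_c^{\textup{vot}}(\Zdd,\UU)\le 1-q<1$.

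The main technical obstacle is the isolation step: the voter-eroding hypothesis describes a single droplet in an otherwise uniformly $+$ lattice, whereas the true dynamics contains a sea of distant $-$ sites. Quantifying the finite-propagation estimate --- bounding the probability that a chain of clock rings connects $D$ to a $-$ site at distance $L_{k+1}=L_k^{\alpha}\gg L_k^{c}=T_k$ within time $T_k$ --- and iterating it carefully across scales so that the failure probabilities remain summable is where most of the work lies. The other ingredients (the geometry of $\mathcal T$-droplets, the inductive Bernoulli estimate \eqref{eq:plan-decay}, and the final Borel--Cantelli) are standard once this coupling is in place.
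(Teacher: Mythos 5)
There is a genuine gap, and it sits at the heart of your Step 1: the claimed estimate $\p_p\bigl(v\in\bigcup\mathcal{D}_k\bigr)\le e^{-L_k}$ cannot hold for a hierarchy built from the \emph{initial} configuration alone. For any fixed $q=1-p>0$ the initial $-$ sites have positive density, so a box of side $L_k$ typically contains about $qL_k^2\to\infty$ of them; in particular the probability that an $L_k$-block contains some initial $-$ tends to $1$ as $k$ grows, and in your merging construction every droplet eventually coalesces with further droplets at all sufficiently large scales, so the hierarchy never becomes sparse. Quantitatively, your merging criterion (merge as soon as droplets fit jointly inside an $(\mathcal T,L_{k+1})$-droplet) only forces \emph{two} level-$k$ clusters inside a region of diameter $L_{k+1}$, so the best inductive bound it yields is of the shape $p_{k+1}\lesssim L_{k+1}^{O(1)}\,p_k^{2}$, i.e.\ decay like $e^{-\Theta(2^k)}$, which is vastly larger than the required $e^{-L_{k+1}}=e^{-L_k^{\alpha}}$ once $k$ is large; and upgrading the requirement to ``linearly many'' level-$k$ defects (as in the extremal Lemma \ref{EL}) does not rescue the static argument, because the relevant density of defective $L_k$-blocks in the initial configuration is close to $1$ rather than small. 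The same defect propagates into your Step 2: since, statically, structure of every level is present within distance $O(L_k)$ of every vertex, your finite-propagation estimate cannot rule out re-infection of an already eroded low-level droplet by the slow erosion of nearby large droplets, so the cascade does not close.

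This is precisely why the paper does not separate geometry from dynamics but interleaves them scale by scale. The block-dynamics coupling of Proposition \ref{lema1} uses the erosion during each interval $[T_k,T_{k+1})$ to drive the \emph{renormalized} block density down to $\hat q_k\le q_k=\exp(-a/q_{k-1})$, and it is this rapidly decaying $q_k$ --- not the initial $q$ --- that enters the covering-algorithm estimate (Lemma \ref{lema3}, via Lemmas \ref{AL} and \ref{EL}, whose proof needs $n^{1/3c}q_k\le 1$), the erosion step (Lemma \ref{lema3'}), and the clock-ring-path estimate controlling outside influence (Lemma \ref{lema4}). Your ingredients (voter-eroding hypothesis, monotonicity, paths of clock rings, multi-scale structure) are the right ones and all appear in the paper, but they must be arranged as a dynamic renormalization: the paper in fact proves the stronger Theorem \ref{stronger} (stretched-exponential decay of $\p_p(\sigma_t(0)=-)$) and then deduces Theorem \ref{Main} by Borel--Cantelli, rather than running a one-shot static hierarchy followed by a single dynamical cascade.
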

The proof of this theorem is essentially equivalent to the proof of Theorem \ref{Main2}, thus, from now on, we will only focus on the $\U$-voter dynamics. The only advantage is that in this case, we can provide explicit examples of voter-eroding critical families, like the following ones.
\begin{exam}\label{easyexamples}
\begin{enumerate}
    \item  $\U=
\{\{e_2,-e_2\},\{-e_1,e_2\},\{-e_1,-e_2\}\}$,  with  $\mathcal T=\{\pm e_1, \pm e_2\}$ (so that $\mathcal T$-droplets are rectangular).
This is usually called the {\it Duarte model} (see, e.g. \cite{M95}). 
    \item   $\U=
\{\{e_1,-3e_2, -2e_2, -e_2\},\ \{2e_1,3e_2, 5e_2\},\ \{-e_1,e_2\},\ \{-2e_1, -4e_2\},\ \{(1,1),-e_2\}\}$,  with  $\mathcal T=\Ss=\{\pm e_1, \pm e_2\}$, and
    \item $\U=\{\{(-1,1),(-1,-1)\},\ \{(0,1),(1,1)\},\ \{(0,-1),(1,-1)\},\ \{(-1,2),(-1,-1)\}\}$, with $\mathcal T=\Ss=\left\{-e_1, \frac{1}{\sqrt 2}(1,1), \frac{1}{\sqrt 2}(1,-1)\right\}$ (so, $\mathcal T$-droplets are triangular).
\end{enumerate}
\end{exam}

It is clear that the families in the above example are critical. In Section \ref{Exampless}, we will explain how to deduce that they are also voter-eroding, and other general sources of examples will be mentioned.

 

\section{Outline of the proof and bootstrap percolation}
\subsection{Outline of the proof}
Here we give a sketch of the proof of Theorem \ref{Main}. In order to prove Theorem \ref{Main}, we will combine techniques of \cite{BSU15} and \cite{FSS02}, indeed, we will be able to prove a stronger result, namely, that fixation occurs in time with a stretched exponential tail. From now on, all mentioned constants will depend on the family $\U$.

\begin{theor}\label{stronger}
Let $\UU$ be a voter-eroding critical two-dimensional family. There exist constants $\gamma>0$ and $p_0<1$ such that, for every $p>p_0$,
 \begin{equation}
  \p_p[\sigma_t(0)=-]\le \exp(-t^{\gamma}),
 \end{equation}
  for all sufficiently large $t$.
\end{theor}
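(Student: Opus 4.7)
The plan is to combine the bootstrap-percolation renormalization of \cite{BSU15} with the voter-eroding hypothesis in the style of \cite{FSS02}. Throughout I use the monotonicity of the $\U$-voter dynamics: under the basic coupling, $\sigma_0 \le \sigma_0'$ (with $- < +$) implies $\sigma_t \le \sigma_t'$ for all $t \ge 0$. Hence to upper bound $\p_p[\sigma_t(0) = -]$ it suffices to exhibit, with high probability, a random region $P \ni 0$ and a coupled initial configuration $\sigma_0' \le \sigma_0$ whose origin becomes $+$ by time $t$.

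Since $\U$ is critical, every open semicircle meets both $\Ss$ and $-\Ss := \{-u : u \in \Ss\}$, so one may choose a finite $\mathcal{T}' \subset -\Ss$ such that $\mathcal{T}'$-droplets are bounded. Fix $L = t^{\beta}$ for a small $\beta > 0$ to be chosen, let $\eta \in (0,1)$, and set $R_0 := L^{\eta} + \max_{X \in \U}\max_{y \in X}\|y\|$. For a $\mathcal{T}'$-droplet $P$, write $P_{R_0}$ for its $R_0$-interior. A BSU-type estimate for critical two-dimensional families \cite{BSU15} gives, for $p$ close enough to $1$ and with probability at least $1 - \exp(-L^{\eta_0})$ for some $\eta_0 = \eta_0(\U) > 0$, a ``good event'' on which: (i) the origin lies in a $\mathcal{T}'$-droplet $P$ of scale in $[L/2, L]$ whose ring $P \setminus P_{R_0}$ is entirely $+$ at time $0$; and (ii) every connected cluster of $-$ sites in $P_{R_0}$ is contained in a $(\mathcal{T}, \hat L)$-droplet $\hat D \subset P_{R_0}$ of diameter $\hat L \le L^{\eta}$ (in particular $\hat D$ contains the origin's cluster, if any).

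On this event, define $\sigma_0'$ to equal $\sigma_0$ inside $P$ and to be $-$ outside $P$; then $\sigma_0' \le \sigma_0$. Stability of $-u \in \Ss$ for each $u \in \mathcal{T}'$ forbids any $X \in \U$ from lying in $\mathbb{H}_{-u}$, which --- together with the thickness $\ge L^{\eta}$ of the ring, which prevents $v + X$ from reaching the interior $-$-clusters for any $v$ in the ring --- ensures the ring stays $+$ forever under the coupled $\sigma'$-dynamics. Hence the dynamics on $P_{R_0}$ is a closed Markov process with frozen $+$ ring. The reference voter-eroding configuration $\hat\sigma_0$ (all $+$ except $\hat D$ which is $-$) satisfies $\hat\sigma_0|_{P_{R_0}} \le \sigma_0'|_{P_{R_0}}$, and the full $\hat\sigma$-dynamics restricted to $P_{R_0}$ coincides with the same closed Markov process (since outside $\hat D$ is $+$ and stable by $\mathcal{T} \subset \Ss$). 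Monotonicity of the closed dynamics yields $\hat\sigma_t|_{P_{R_0}} \le \sigma'_t|_{P_{R_0}}$, while the voter-eroding hypothesis gives $\hat\sigma_t(0) = +$ for $t \ge \hat L^c$ except on an event of probability $\le e^{-\hat L}$. Therefore $\sigma'_t(0) = +$, and so $\sigma_t(0) = +$ as well. Choosing $\beta$ with $\eta c \beta < 1$ makes $\hat L^c \le L^{\eta c} \le t$ for large $t$, and combining error bounds yields $\p_p[\sigma_t(0) = -] \le \exp(-t^{\gamma})$ for $\gamma := \beta \min(\eta_0,1)/2 > 0$.

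The main obstacle is producing the BSU-type combined estimate (i)--(ii) with the claimed stretched-exponential rate $\exp(-L^{\eta_0})$: a renormalized contour argument in the BSU spirit must simultaneously control the outer $\mathcal{T}'$-protective shell (of the right orientation for external stability) and the inner $\mathcal{T}$-droplet cover, with $\eta_0$ depending on the BSU classification of $\U$. A secondary but non-negligible subtlety is the ring-stability analysis --- choosing the ring thick enough (of order $L^{\eta}$) that no $v + X$ from a ring site can reach an interior $-$-cluster, and correctly handling the corners of $P$ where two stable directions in $\mathcal{T}'$ meet.
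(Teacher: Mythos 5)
There is a genuine gap, and it is fatal to the single-scale structure of your argument: your ``good event'' (i) cannot hold with probability $1-\exp(-L^{\eta_0})$. For any fixed $p<1$ the initial $-$ sites have density $1-p>0$, so the probability that one \emph{prescribed} ring of perimeter $\Theta(L)$ and thickness $\Theta(L^{\eta})$ is entirely $+$ at time $0$ is $p^{\Theta(L^{1+\eta})}$, and a union bound over the polynomially many $\mathcal T'$-droplets of scale in $[L/2,L]$ containing the origin shows that the probability that \emph{any} such all-$+$ shell exists tends to $0$ as $L=t^{\beta}\to\infty$ --- it does not tend to $1$. No BSU-type estimate can produce it: \cite{BSU15} controls closures of $p$-random sets, not the existence of macroscopic vacant shells in the initial configuration. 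Moreover, even the weaker version of (i)--(ii) in which one only asks to confine the time-$0$ $-$ set of a box of side $L$ inside small droplets breaks down, because the extremal-lemma bound (Lemma \ref{EL}) is only effective for droplets of diameter $O(1/q)$ at density $q=1-p$; once $L$ exceeds the ($p$-dependent, but fixed) critical bootstrap length, the box is typically internally spanned and no covering by small droplets exists. This is exactly why the paper, following \cite{FSS02}, runs a multi-scale renormalization: boxes $B_k$, times $T_k$, a block dynamics that favors $-$, and the induction $\hat q_k\le q_k$ with $q_{k+1}=\exp(-a/q_k)$ (Proposition \ref{lema1}). The nearly-all-$+$ environment at scale $k+1$ is \emph{created by the dynamics by time $T_k$} at the renormalized density $q_k$ (so that the box side $l_{k+1}=(1/q_k)^{3c}$ stays below the critical bootstrap length for $q_k$, making Lemmas \ref{AL} and \ref{EL} usable), and the outer influence is controlled probabilistically by the absence of long paths of clock rings (Lemma \ref{lema4}), not by a deterministic frozen barrier. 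Your single-scale plan cannot be repaired without reintroducing this induction over scales.

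A second, independent error is the monotone comparison at the end: $\hat\sigma_0\le\sigma_0'$ is false, since $\sigma_0'$ is $-$ on \emph{all} the clusters scattered through $P_{R_0}$ (positive density) and on all of $\Zdd\setminus P$, whereas $\hat\sigma_0$ is $-$ only on the single droplet $\hat D$; so eroding $\hat D$ gives no bound on $\sigma'_t(0)$ in the direction you need. To make this step work one must cover the \emph{entire} $-$ set (in fact its bootstrap closure) by pairwise well-separated droplets via the covering algorithm (Definition \ref{covalgo}), argue by a first-flip argument using $\mathcal T\subset\Ss$ that the region between droplets stays $+$ so each droplet erodes as if isolated, and then union bound the erosion times over all droplets --- this is precisely the content of Lemmas \ref{lema3} and \ref{lema3'} in the paper, and it is only available there because the relevant density at step $k+1$ is $q_k$, not the fixed initial density $1-p$. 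The corner/ring stability issue you flag for $\mathcal T'\subset-\Ss$ is real but minor by comparison; the two points above are what sink the proposal.
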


Let us deduce Theorem \ref{Main} from Theorem \ref{stronger}.
\begin{proof}[Proof of Theorem \ref{Main}]
Fix $t\ge 0$ and consider the events
\[F:=\{\sigma_s(0)\textup{ is constant for }s\in[t-1,t]\},\]
\[F':=\{\exists s\in[t-1,t]:\sigma_s(0)=-\}.\]

Note that $\p_p(\sigma_t(0)=-)\ge \p_p(F'|F)\p_p(F)$, and that $\p_p(F)\ge e^{-1}$, so by the strong Markov property it follows that
\[
\p_p(F')\le e\p_p[\sigma_t(0)=-].\]

Now, by Theorem \ref{stronger} and union bound,
for $p>p_0$,

\begin{align*}
 \p_p[\exists s\ge t:\sigma_s(0)=-] & \le \sum_{k=0}^\infty \p_p[\exists s\in[t+k,t+k+1):\sigma_s(0)=-]\\
 & \le \sum_{k=0}^\infty e
 \exp(-(t+k+1)^{\gamma})
  \le e^{-t^{\gamma/2}},
\end{align*}
if $t$ is large enough.
Thus, if 
$F_k:=\{\sigma_s(0)=+,\ \forall s\ge k\}$, and $k_0$ is large

\[\sum_{k\ge k_0}\p_p(F_k^c)\le \sum_{k\ge k_0}\exp(-k^{\gamma/2}) <+\infty.\]

Thus, by the Borel-Cantelli Lemma 
\[\p_p[0\text{ fixates at }+] =\p_p \left[\bigcup_{i\ge 1}\bigcap_{k\ge i}F_k\right]=1.\]

Hence, $p_c^{\textup{vot}}(\Zdd,\UU)\le p_0<1$ and we are finished. 
\end{proof}

At this point, it only remains to show Theorem \ref{stronger}, and the rest of this paper is devoted to its full proof.
To help to understand the overall idea of such proof, we now provide a sketch.
\begin{proof}[Proof Sketch of Theorem \ref{stronger}]
As that proof in \cite{FSS02}, we use a multi-scale analysis; this consists of observing the process in some large boxes $B_k$ at some times $T_k$ which increase rapidly with $k$, and tiling $\Zdd$ with disjoint copies of $B_k$ in the obvious way.
This is done by induction on $k$; $T_0=0$ and suppose we are viewing the evolution inside
the interval $[T_{k-1},T_k)$. In $B_k$ we couple the
process with a {\it block-dynamics} which favors the spins in state $-$ (the $-$ team), in the sense that, when there is
some $-$ in $B_k$ at time $T_k$ in the original process then it
is also true for the block-dynamics.

Inside $B_k$ we allow the $-$ team to `infect' the
$+$ team via their own bootstrap process (meaning that just spins in state $+$
are allowed to flip). We prove that by time $T_k$, every droplet $D\subset B_k$ full
of $-$s has `relatively big' size with small probability. In other words, such droplets satisfy $|D|\ll|B_k|$ with high probability.

Then, we prove that before such droplets $D$ could be created, the $+$ team inside
$B_k$ will typically eliminate it. Moreover, we have to show that the probability that the $-$ team
could receive any help from outside of $B_k$ is also small.

The inductive step goes as follows: at time $T_k$, if there is some $-$ in $B_k$, we declare $B_k$ to be a $-$, otherwise declare
$B_k$ to be $+$, and now, we observe the evolution in a new time interval $[T_k,T_{k+1})$.
The next step, is to consider a larger box $B_{k+1}$ consisting of several copies of $B_k$
that we have declared to be either $-$ or $+$, and we start over again.
By induction on $k$, we will show that if $q:=1-p$ is very close to 0, Theorem \ref{stronger}
holds for all times of the form $t=T_k$. Finally, by using one more coupling trick,
we will be able to extend the statement for all $t\ge 0$.
\end{proof}

\subsection{Bootstrap percolation families}\label{bpf}
First, we review a large class of $d$-dimensional monotone cellular automata, which were
recently introduced by Bollob\'as, Smith and Uzzell \cite{BSU15}, and then focus on dimension two.

Let $\U$ 
be an arbitrary finite family of finite
subsets of $\Zd\setminus \{0\}$. We call $\U$ the {\it update family}, 
each $X\in\U$ an {\it update rule}, and the process itself $\U${\it-bootstrap percolation}.
Now given
a set $A\subset\Zd$ of initially {\it infected} sites, set $A_0=A$, and define for each $t\ge 0$,
\[A_{t+1}=A_t\cup\{x\in\Zd: x+X\subset A_t \text{ for some }X\in\U\}.\]
Thus, a site $x$ becomes infected at time $t+1$ if the translate by $x$ of one of the sets
of the update family is already entirely infected at time $t$, and infected sites remain
infected forever. The set of eventually infected sites is the {\it closure} of $A$, denoted by
$[A]=\bigcup_{t\ge 0}A_t$.

Set $d=2$. Recall that for each $u\in S^1$, we denote $\dhp:=\{x\in\Zdd:\langle x,u\rangle
<0\}$.
We say that $u$ is a {\it stable direction} if $[\dhp]=\dhp$ and we denote by
$\Ss=\Ss(\U)\subset S^1$ the collection of stable directions.
Observe that this definition of $\Ss$ coincides with the one given in Definition \ref{crit}.
The following classification of two-dimensional update families was proposed by Bollob\'as, Smith
and Uzzell \cite{BSU15}.

An update family $\U$ is:
\begin{itemize}
 \item {\it supercritical} if there exists an open semicircle in $S^1$ that is disjoint from $\Ss$;
 \item {\it critical} if there exists a semicircle in $S^1$ that has finite intersection with $\Ss$,
and if every open semicircle in $S^1$ has non-empty intersection with $\Ss$;
 \item {\it subcritical} otherwise. 
 \end{itemize}
 
 The justification for this trichotomy is provided by the next result. Suppose
 we perform the bootstrap percolation process on $\Zdd_n$ instead of $\Zdd$,
$A\subset\Zdd_n$ is $p$-random, and consider the critical probability
 \[p_c(\Zdd_n,\U):=\inf\{p:\p_p([A] = \Zdd_n)\ge 1/2\}.\]

Bollob\'as, Smith and Uzzell \cite{BSU15} proved that the critical
probabilities of supercritical families are polynomial, while those of critical families
are polylogarithmic. Later, Balister, Bollob\'as, Przykucki and Smith \cite{BBPS16} proved that
the critical probabilities of subcritical models are bounded away from zero.
We summarize those results in the following.
\begin{theor}[2-dimensional classification]
 Let $\U$ be a 2-dimensional update family
 \begin{enumerate}
  \item If $\U$ is supercritical then $p_c(\Zdd_n,\U)= n^{-\Theta(1)}$;
  \item If $\U$ is critical then $p_c(\Zdd_n,\U)= (\log n)^{-\Theta(1)}$;
  \item If $\U$ is subcritical then $\liminf p_c(\Zdd_n,\U)>0$.
 \end{enumerate}
\end{theor}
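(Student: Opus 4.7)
The statement is a compound classification combining results from \cite{BSU15} (parts (1) and (2)) and \cite{BBPS16} (part (3)). I would organize the plan around the common underlying heuristic: the geometric condition on $\Ss$ dictates the shape and speed of a single growing droplet, and the critical probability is then obtained by matching the ``cost'' of nucleating such a droplet against the ``gain'' from its growth in a box of side $n$.

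\emph{Supercritical case.} The plan is to exploit an open semicircle $C\subset S^1$ disjoint from $\Ss$: for every $u\in C$ some rule $X_u\in\U$ satisfies $X_u\subset \mathbb H_u$, which means that from any already-infected half-plane $\mathbb H_u+a$, infection propagates freely in direction $-u$. First I would show that a suitable finite collection of such rules allows any bounded ``seed'' of infections to grow into an unbounded cone. The upper bound $p_c(\Zdd_n,\U)\le n^{-c}$ then follows from a standard nucleation argument: a seed of fixed size $k$ occurs with probability $\gtrsim p^k$, so in a box of side $n$ it occurs once $p\gtrsim n^{-2/k}$, and the resulting growth fills the box. The matching lower bound comes from a union bound over all possible ``oriented spanning paths'' of infection, which forces $p\gtrsim n^{-c'}$.

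\emph{Critical case.} Here the hypothesis gives a semicircle in which only finitely many directions of $\Ss$ lie, so droplet growth is possible in roughly half the directions and obstructed in the other half by isolated stable directions. The plan for the upper bound is to build nucleating ``critical droplets'' of side $k(p)=(1/p)^{O(1)}$ whose interior is easily filled, and then to grow them by a biased random-walk-type mechanism that adds one layer each time a rare cluster of infected sites nearby is encountered; the Aizenman--Lebowitz heuristic makes the relevant length scale $\exp((\log 1/p)^{\Theta(1)})$, yielding $p_c\le(\log n)^{-c}$. For the lower bound I would use a hierarchical rescaling: a fully infected box of side $n$ forces a hierarchy of droplets at dyadic scales, and summing the cost over scales produces a factor of $(\log 1/p)^{c'}$ that must beat $\log n$.

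\emph{Subcritical case.} Since no semicircle has finite intersection with $\Ss$, the set of stable directions is ``rich enough'' in every half-plane. The plan is to construct, for any fixed $p<1$, a random pattern of $-$ sites — a bounded ``stable obstacle'' adapted to the geometry of $\Ss$ — which appears at positive density in a $p$-random initial configuration and which cannot be erased by the $\U$-bootstrap process from any direction. The main step is the combinatorial construction of these obstacles: one needs to use the stable directions in each open semicircle to certify that no rule can simultaneously ``point out of'' the obstacle. Together with a standard ergodicity/renormalization argument, this gives $\p_{1-p}([A]=\Zdd_n)\to 0$ uniformly in $n$ for $p$ small, hence $\liminf p_c(\Zdd_n,\U)>0$.

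The main obstacle, and the deepest of the three, is the construction of stable obstacles in part (3): translating the non-existence of a finite-intersection semicircle into a genuinely stable finite configuration is not formal, and in \cite{BBPS16} it requires an explicit covering construction together with a careful renormalization. The critical upper bound in (2) is the next most delicate step, since the $(\log n)^{-\Theta(1)}$ scaling is sensitive to the precise mechanism by which droplets absorb isolated seeds, and the right droplet shape depends on the combinatorial structure of $\Ss$ within the almost-stable semicircle.
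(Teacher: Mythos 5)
The paper does not prove this theorem: it is stated as a summary of results imported from Bollob\'as, Smith and Uzzell \cite{BSU15} (parts (1) and (2)) and Balister, Bollob\'as, Przykucki and Smith \cite{BBPS16} (part (3)), with no argument given. So there is no ``paper proof'' to compare your plan against; what you have written is an outline of the arguments in those two references.

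As a sketch of those references, your plan is broadly on the right track --- nucleation cost versus growth gain in the supercritical and critical cases, and stable obstacles in the subcritical case --- but one scaling is wrong in a way worth flagging. In the critical case you assert that the relevant length scale is $\exp\bigl((\log 1/p)^{\Theta(1)}\bigr)$. That is too small. Since $p_c(\Zdd_n,\U)=(\log n)^{-\Theta(1)}$ for critical families, inverting gives a critical side length $n_c(p)=\exp\bigl(p^{-\Theta(1)}\bigr)$; already for $\mathcal N_2^2$ one has $p_c\sim \pi^2/(18\log n)$, i.e.\ $n_c(p)\approx e^{c/p}$. Carrying $\exp\bigl((\log 1/p)^{\Theta(1)}\bigr)$ through your Aizenman--Lebowitz / layer-by-layer absorption argument would give the wrong answer for $p_c$; you want the cost of filling a critical droplet to be $\exp\bigl(-p^{-\Theta(1)}\bigr)$, balanced against $n^2$ available locations. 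The rest of the critical-case outline, as well as the supercritical nucleation argument and the high-level description of the subcritical obstacle construction, are consistent with the strategies in \cite{BSU15} and \cite{BBPS16}; if you intend to use this theorem in the paper you should simply cite it as the present paper does rather than reproving it.
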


\begin{rema} In the $\U$-voter dynamics, fixation at $+$ should not occur for families which are not critical. For instance,
\begin{itemize}
 \item The families $\U(V)$ introduced in Section \ref{Uvoter} are supercritical (for $V\subset\Zdd$) and do not fixate (see, for example, \cite{CG83}).
 \item The family $\mathcal N_{3}^2$ 
 is subcritical and we do not expect it to fixate at $+$, because
 any translate of $\{1,2\}^2$ that is entirely $-$ at time $t=0$ will remain in state $-$ forever.
 It could be the case that some vertices will fixate at $+$ and others at $-$.
\end{itemize}
\end{rema}
\subsubsection*{Some standard tools}
Let us fix a voter-eroding critical family $\U$. 
We will refer to its associated $\mathcal T$-droplets simply as {\it droplets}.
Now, we introduce an algorithm whose importance is to provide two key lemmas concerning droplets: an ``Aizenman-Lebowitz lemma",
which says that a covered droplet contains covered droplets of all intermediate
sizes, and an extremal lemma, which says that a covered droplet contains a linear
proportion of initially infected sites.

\begin{defi}[Covering algorithm]\label{covalgo}
Suppose $n$ is large and $A\subset\Zdd_n$.
The first step is to choose a sufficiently large constant $\kappa$, fix a droplet $\hat D$ of diameter roughly
$\kappa$, and place a copy of $\hat D$ (arbitrarily) on each element of $A$. Now, at each
step, if two droplets in the current collection are within distance $\kappa$ of
one another, then remove them from the collection, and replace them by
the smallest droplet containing both. This process stops in at most $|A|$ steps with some finite collection
of droplets, say $\{D_1,\dots,D_z\}$.
 \end{defi}

If a droplet occurs at some point in the covering algorithm, then we say that it is {\it covered} by $A$. If $\kappa$ is chosen sufficiently large,
then one can prove that the final collection of droplets covers $[A]_\U$ (see \cite{BSU15} for details).
Now we are ready to state the 2 key lemmas which will help us to control the expanding of the process, their proof can be found in \cite{BSU15}.

Let us write diam$(D)$ for the diameter
of a droplet $D$.

\begin{lemma}[Aizenman-Lebowitz lemma]\label{AL}
 Let $D$ be a covered droplet. Then for every $1\le k\le \textup{diam}(D)$, there is a covered droplet
 $D'\subset D$ such that $k\le \textup{diam}(D')\le 3k$.
\end{lemma}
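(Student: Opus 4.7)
The plan is to prove this by induction on the number of merging operations in the covering algorithm (Definition \ref{covalgo}) used to produce $D$. The base case is the initial placement: each seed droplet $\hat D$ of diameter $\approx \kappa$ is covered, so for $k$ at most that scale one takes $D' = \hat D$ directly. The essential geometric input is that whenever two $\mathcal T$-droplets $D_1, D_2$ at mutual distance at most $\kappa$ are merged into the smallest enclosing $\mathcal T$-droplet $D$, their diameters satisfy
\[
\textup{diam}(D) \le \textup{diam}(D_1) + \textup{diam}(D_2) + C\kappa,
\]
for a constant $C = C(\mathcal T)$ depending only on the (finite) set of directions defining droplets. This follows because $\mathcal T$-droplets are intersections of half-planes in a fixed finite collection of directions, so the minimal enclosing droplet is obtained by shifting finitely many support lines outward to enclose both pieces.

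For the inductive step, suppose $D$ arose from merging covered droplets $D_1, D_2 \subset D$ at distance at most $\kappa$, and fix $k$ with $\kappa \le k \le \textup{diam}(D)$ (the case $k < \kappa$ is trivial since the initial seed droplets themselves have diameter $\Theta(\kappa)$). If $k \le \max(\textup{diam}(D_1), \textup{diam}(D_2))$, then the induction hypothesis applied to the larger constituent returns a covered $D' \subset D$ in the diameter window $[k, 3k]$. Otherwise both constituents satisfy $\textup{diam}(D_i) < k$, and the displayed merging estimate gives $\textup{diam}(D) < 2k + C\kappa \le 3k$ once $k \ge C\kappa$, so $D$ itself qualifies as $D'$ (since $k \le \textup{diam}(D)$ by hypothesis).

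The main obstacle is the clean geometric control of the merging operation: one needs the minimal $\mathcal T$-droplet enclosing $D_1 \cup D_2$ to have diameter at most the sum of the two diameters plus a uniform $O(\kappa)$ overhead, and one also needs to confirm that this minimal enclosing droplet is itself a legitimate member of the covering collection (so the inductive chain of ``covered'' droplets is preserved along the algorithm). Both facts hinge on the finiteness of $\mathcal T$ and on the support-line description of $\mathcal T$-droplets inherited from Definition \ref{DefiDrop}, and are worked out carefully in \cite{BSU15}; my proof would amount to porting that verification to the present setting and then running the two-case induction above, with $\kappa$ absorbed into the constant $3$ (or, if preferred, restating the lemma with $3$ replaced by a larger $\mathcal T$-dependent constant for small $k$).
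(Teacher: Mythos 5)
The paper does not actually prove Lemma \ref{AL}: it is quoted from Bollob\'as--Smith--Uzzell and the text simply says the proof ``can be found in \cite{BSU15}''. Your sketch reconstructs the standard argument behind that citation (track the covering algorithm and examine the first merging step that produces a droplet of diameter at least $k$), so in spirit you are following the same route as the proof the paper points to, and the two-case induction you describe is the right skeleton; your observation that the constant $3$ (or the range of $k$) must implicitly absorb the seed scale $\kappa$ is also correct and harmless for the way the lemma is used in Lemma \ref{lema3}.

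The one substantive gap is exactly the step you flag but then dismiss in one line: the merging estimate $\textup{diam}(D)\le \textup{diam}(D_1)+\textup{diam}(D_2)+C\kappa$ does not follow merely from ``shifting finitely many support lines outward''. Shifting support lines gives containment, not diameter control: for an \emph{arbitrary} set $S$, the smallest $\mathcal T$-droplet containing $S$ can have diameter larger than $\textup{diam}(S)$ by a multiplicative constant depending on the angles between the directions of $\mathcal T$ (think of a family whose droplets are very flat or strongly sheared polygons), so a naive ``union has diameter $\le \textup{diam}(D_1)+\textup{diam}(D_2)+\kappa$, hence so does its droplet hull'' argument fails. The additive form of the estimate genuinely uses that $D_1$ and $D_2$ are themselves $\mathcal T$-droplets, via control of the widths of the enclosing droplet in each direction of $\mathcal T$, and this is precisely the content of the corresponding lemma in \cite{BSU15}. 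Since you defer that verification to \cite{BSU15} --- which is all the paper itself does --- your proposal is acceptable as a sketch, but as written it should not present the merging inequality as an immediate consequence of the support-line description; that inequality is the whole lemma.
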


\begin{lemma}[Extremal lemma]\label{EL}
 There exists a constant $\varepsilon>0$ such that for every covered droplet $D$,
 $|D\cap A|\ge \varepsilon\cdot \textup{diam}(D)$.
\end{lemma}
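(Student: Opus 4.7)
The plan is to prove the extremal lemma by induction on the binary merge tree produced by the covering algorithm (Definition \ref{covalgo}), using a ``size'' functional that controls the Euclidean diameter from above and is approximately subadditive under merges. Specifically, for a $\mathcal T$-droplet $D$ I would set
\[
s(D):=\sum_{u\in\mathcal T} w_u(D),\qquad w_u(D):=\max_{x\in D}\langle x,u\rangle-\min_{x\in D}\langle x,u\rangle,
\]
the sum of the widths of $D$ in the stable directions of $\mathcal T$. The lemma will follow from three geometric/combinatorial facts about $s$.

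The three facts I would establish, in order, are: \textbf{(i)} A reverse width bound $\mathrm{diam}(D)\le C_1\,s(D)$ for some $C_1=C_1(\mathcal T)$, obtained by picking two linearly independent directions $u_1,u_2\in\mathcal T$ (which exist since $\mathcal T$-droplets are bounded in $\Zdd$), enclosing $D$ in a parallelogram of side lengths $w_{u_1}(D), w_{u_2}(D)$, and bounding its diameter by the longer diagonal. \textbf{(ii)} Approximate subadditivity under merging: if $D$ is the result of merging $D_1,D_2$ at Euclidean distance $\le\kappa$, then for every $u\in\mathcal T$ the projections of $D_1$ and $D_2$ onto the $u$-axis are intervals separated by a gap of length $\le\kappa$, so the width of the smallest $\mathcal T$-droplet containing $D_1\cup D_2$ satisfies $w_u(D)\le w_u(D_1)+w_u(D_2)+\kappa$; summing over $u$ gives
\[
s(D)\le s(D_1)+s(D_2)+|\mathcal T|\kappa.
\]
\textbf{(iii)} The seed droplet $\hat D$ has diameter of order $\kappa$, hence $s(\hat D)=O(\kappa)$.

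With these in hand, I would prove by induction on the number of leaves $n(D)$ in the merge subtree rooted at $D$ that
\[
s(D)\le n(D)\cdot s(\hat D) + (n(D)-1)\cdot |\mathcal T|\kappa \le C_2\kappa\cdot n(D),
\]
since a binary merge tree with $n$ leaves has exactly $n-1$ internal nodes. Placing each seed so as to contain its assigned point of $A$---permitted by the word ``arbitrarily'' in Definition \ref{covalgo}---ensures that every leaf in the subtree of $D$ contributes a distinct point of $A\cap D$, so $|D\cap A|\ge n(D)$. Combining with (i),
\[
\mathrm{diam}(D)\le C_1\,s(D)\le C_1 C_2\kappa\,|D\cap A|,
\]
which gives the lemma with $\varepsilon:=(C_1 C_2\kappa)^{-1}>0$.

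The main obstacle I anticipate is the reverse width bound (i): one must check that the constant $C_1$ depends only on $\mathcal T$ and not on the particular aspect ratio of $D$. For rectangular shapes (such as the Duarte family) this is immediate from $\mathrm{diam}(D)\le w_{e_1}(D)+w_{e_2}(D)$; for general critical $\mathcal T\subset\Ss$---including the triangular droplets of Example \ref{easyexamples}(3)---a short argument in the basis dual to $\{u_1,u_2\}$, with a constant of order $1/|\sin\theta|$ where $\theta$ is the angle between $u_1$ and $u_2$, is required to rule out pathologically elongated droplets. Once (i) is in place, (ii) is a routine interval-projection calculation and the closing induction is automatic.
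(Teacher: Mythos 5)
The paper does not prove this lemma itself; it cites Bollob\'as, Smith and Uzzell \cite{BSU15}, where the argument runs along the same lines you propose: induct over the binary merge tree of the covering algorithm, using a quasi-additive size functional for droplets together with the observation that each leaf of the subtree rooted at $D$ contributes a distinct point of $A\cap D$. Your overall structure, steps (i) and (iii), and the closing induction are all sound. The obstacle you flag in (i) is not in fact one: since the $(\mathcal T,L)$-droplets are finite, $0$ lies in the interior of $\textup{Hull}(\mathcal T)$, so $\mathcal T$ spans $\RR^2$ and two linearly independent $u_1,u_2\in\mathcal T$ exist; the resulting constant $C_1$ is of order $1/|\sin\theta|$ and depends only on $\mathcal T$, exactly as you say.

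The genuine gap is in step (ii), and it is not where you anticipated it. Your interval-projection argument correctly gives $w_u(D_1\cup D_2)\le w_u(D_1)+w_u(D_2)+\kappa$. But the ``so'' that follows is unjustified: what you need is a bound on $w_u(D)$ where $D$ is the \emph{smallest $\mathcal T$-droplet containing} $D_1\cup D_2$, and $D$ can have strictly larger width in direction $u$ than $D_1\cup D_2$ does whenever $\mathcal T$ is not centrally symmetric. Concretely, take $\mathcal T=\{-e_1,\frac{1}{\sqrt 2}(1,1),\frac{1}{\sqrt 2}(1,-1)\}$ and $u=-e_1$, with $D_1,D_2$ two tiny droplets stacked vertically and touching at a single point; then $w_{-e_1}(D_1),w_{-e_1}(D_2)$ and $w_{-e_1}(D_1\cup D_2)$ are all $\approx 0$, but $w_{-e_1}(D)$ is of order the vertical separation, because the two slanted edges of $D$ must extend far to the right of $D_1\cup D_2$ to close off the triangle. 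In general $\min_{x\in D}\langle x,u\rangle<\min_{x\in D_1\cup D_2}\langle x,u\rangle$, so passing from the union to the enclosing droplet can enlarge the width. The conclusion of (ii) is still true with a $\mathcal T$-dependent constant in place of $|\mathcal T|$, which is all the closing induction needs. One clean repair: for a droplet $D'$ with tight constraints $c_v(D')=\max_{x\in D'}\langle x,v\rangle$, write $-u=\alpha v_1+\beta v_2$ with $\alpha,\beta\ge0$ and $v_1,v_2\in\mathcal T$ the two consecutive directions whose cone contains $-u$, so that $w_u(D')=c_u(D')+\alpha c_{v_1}(D')+\beta c_{v_2}(D')$. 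Since $c_v(D)=\max(c_v(D_1),c_v(D_2))$ for every $v$, the defect $w_u(D_1)+w_u(D_2)-w_u(D)$ equals the same linear form evaluated at the componentwise minima, i.e.\ at $c_v(D_1\cap D_2)$, which is $\ge0$ whenever $D_1\cap D_2\ne\emptyset$. When $D_1\cap D_2=\emptyset$ but the two droplets are within distance $\kappa$, enlarge $D_2$ by $\kappa$ in all directions (still a $\mathcal T$-droplet, each $c_v$ increasing by at most $\kappa$) to restore a nonempty intersection, paying $(1+\alpha+\beta)\kappa=O_{\mathcal T}(\kappa)$ in direction $u$. Summing over $u\in\mathcal T$ gives $s(D)\le s(D_1)+s(D_2)+C(\mathcal T)\kappa$, and the rest of your argument is unchanged.
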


\section{The one-dimensional approach}\label{Section1da}
Let us fix a critical two-dimensional family $\U$, and let $\Ss$ be its stable set. In this section we prove that if we can find $y\in\Ss$ satisfying certain feasible properties, then we can show that $\U$ is voter-eroding, by using a 1-dimensional argument.
We will consider a particular restricted evolution of the dynamics in dimension 1 by freezing everything except a finite
segment orthogonal to that stable direction, then prove that such a segment can be eroded in polynomial time and show how things can be deduced
from this 1-dimensional setting.

\subsection{A fair stable direction}\label{choosey}
Fix a rational direction $y\in\Ss$ (i.e. $y=(y_1,y_2)$ satisfies either $y_2/y_1$ is rational or $y_1=0$). For each $L\in\mathbb N$, we let $Y=Y(y,L)$ 
be any fixed segment consisting of $L$ consecutive vertices in the discrete line $l_y:=\{x\in\Zdd:\langle x,y\rangle=0\}$.

Suppose we freeze each vertex in $\mathbb H_y$ in state $-$ and each vertex outside $\mathbb H_y\cup Y$ 
in state $+$, and at time $t=0$ each vertex in $ Y$ has state $-$, then we let the dynamics evolve only on
$ Y$ (see Figure \ref{pres}). 
\begin{figure}[ht]
  \centering
    \includegraphics[scale=.4]{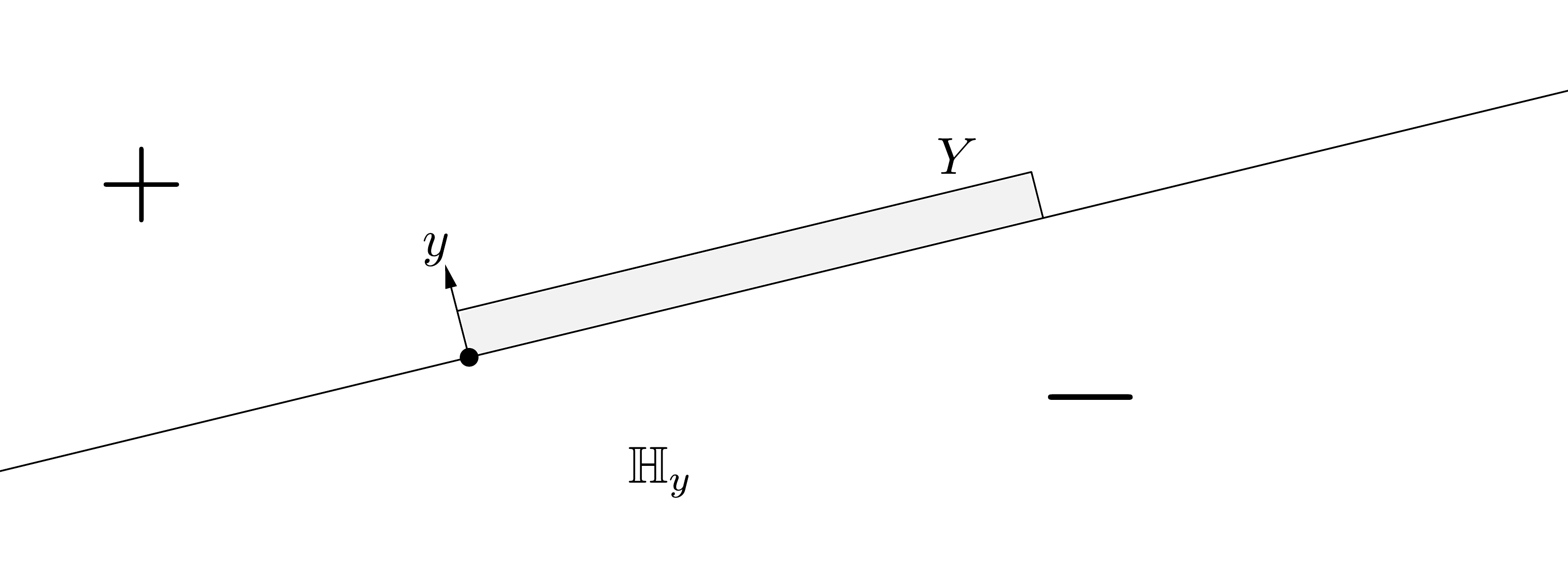}
  \caption{$\mathbb H_y$ entirely $-$ and $\Zdd\setminus (\mathbb H_y\cup Y)$ entirely $+$}
  \label{pres}
\end{figure}

Given a configuration $\eta\in\{+,-\}^Y$ denote $\eta^+$ (resp. $\eta^-$) the set of vertices in $\eta$ having $+$ (resp. $-$) state.
\begin{defi}\label{etat}
\begin{enumerate}
 \item Fix $y\in\Ss$, $L\in\mathbb N$, and consider $Y=Y(y,L)$. For every $t\ge 0$ we let $\eta_t$ denote the configuration in $\{+,-\}^Y$ at time $t$ in these restricted 1-dimensional dynamics.
 \item  Say that $y\in \Ss$ is a {\it fair direction} if it is rational, and in the 1-dimensional dynamics, for each $L\in\mathbb N$ and $t\ge 0$ the following holds
 \begin{equation}\label{presym}
 \sum\limits_{v\in\eta^+_t}r_v(\eta_t)\le \sum_{u\in\eta^-_t}r_u(\eta_t).
 \end{equation}
\end{enumerate}
\end{defi}
Denote by $[-]$ (resp. $[+]$) the configuration in $\{+,-\}^Y$ where all vertices are in state $-$ (resp. +),
and observe that Condition (\ref{presym}) is trivial for $t=0$ since $\eta_0=$ $[-]$ and this gives LHS in (\ref{presym}) equals 0. 
Moreover, when $t$ is large  $\eta_t=$ $[+]$ (the segment fixates at $[+]$) and $r_v([+])=0$ for all $v$ (since $y\in\Ss$), hence LHS $=0$ too.

Note that Condition (\ref{presym}) is implied by the stronger condition
\begin{equation}\label{presymstronger}
\sum\limits_{v\in\eta^+}r_v(\eta)\le \sum_{u\in\eta^-}r_u(\eta), \textup{ for all }\eta\in\{+,-\}^Y,
\end{equation}
which does not depend on the trajectory of the 1-dimensional dynamics $\eta_t$. However, in general we do not know whether they are equivalent or not.
 
Our aim now is to show that the existence of a fair direction is a sufficient condition for a family to be voter-eroding.
Given a fair direction $y$, we are interested in the {\it segment erosion time}
\begin{equation}
 \tau=\tau(y,L):=\inf\{t:\eta_t=[+]\}.
\end{equation}

Here is the core of the 1-dimensional approach.
\begin{prop}\label{poltime}
If there is a fair direction $y$, then there is a constant $c>3$ such that for $L$ large enough we have $\p[\tau > L^{c-1}]\le e^{-1}$.
\end{prop}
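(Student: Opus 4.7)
The plan is to view $\tau$ as the absorption time at $0$ of the pure-jump process $N_t := |\eta_t^-|$ on $\{0,1,\ldots,L\}$, use the fair direction hypothesis to pick up a supermartingale structure, extract $\mathbb{E}[\tau]=O(L^2)$ via a potential-function (Dynkin) argument, and conclude by Markov's inequality. The exponent $c>3$ is then forced by matching $L^{c-1}$ against $L^2$.

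First I would realize $N_t$ as a pure-jump Markov process on $\{0,1,\ldots,L\}$ with upward rate $A^{\uparrow}(\eta):=m^{-1}\sum_{v\in\eta^+}r_v(\eta)$ and downward rate $A^{\downarrow}(\eta):=m^{-1}\sum_{u\in\eta^-}r_u(\eta)$. Because $y\in\Ss$, the configuration $[+]$ is absorbing: every $X\in\U$ has a vertex outside $\mathbb H_y$, which sits in the frozen $+$-region, so no $+\to-$ flip can occur at $[+]$. The fair condition (\ref{presym}) rewrites exactly as $A^{\uparrow}(\eta)\le A^{\downarrow}(\eta)$, so $N_t$ is a non-negative supermartingale absorbed only at $0$.

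Next, I apply Dynkin's formula to the concave potential $\phi(\eta):=N_\eta(2L-N_\eta)$, which satisfies $\phi(\eta_0)=L^2$ and $\phi([+])=0$. A direct generator computation gives
\[
\mathcal{L}\phi(\eta) \;=\; 2(L-N_\eta)\bigl(A^{\uparrow}(\eta)-A^{\downarrow}(\eta)\bigr) \;-\; \bigl(A^{\uparrow}(\eta)+A^{\downarrow}(\eta)\bigr) \;\le\; -\bigl(A^{\uparrow}(\eta)+A^{\downarrow}(\eta)\bigr),
\]
since $L-N_\eta\ge 0$ and fairness forces the first term to be non-positive. Optional stopping at $\tau\wedge T$ followed by $T\to\infty$ (monotone convergence) yields
\[
\mathbb{E}\Big[\int_0^\tau\bigl(A^{\uparrow}+A^{\downarrow}\bigr)(\eta_s)\,ds\Big] \;\le\; \phi(\eta_0) \;=\; L^2.
\]
Combined with a uniform positive lower bound $A^{\uparrow}(\eta)+A^{\downarrow}(\eta)\ge c_0$ at every non-absorbed $\eta\neq[+]$, this gives $\mathbb{E}[\tau]\le L^2/c_0$, and Markov's inequality produces
\[
\p[\tau>L^{c-1}] \;\le\; \frac{L^2}{c_0\,L^{c-1}} \;=\; \frac{L^{3-c}}{c_0} \;\le\; e^{-1}
\]
for $L$ large enough, as soon as $c>3$.

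The non-degeneracy step is the main obstacle. At the extremal $-$-vertex $u$ of $\eta^-$ along $l_y$ (say the leftmost), all vertices of $l_y$ on the outer side of $u$ lie in the frozen $+$-strip $l_y\setminus Y$, and combined with the frozen half-plane $\mathbb{H}_y^c$ above $l_y$ this should provide a wide $+$-region into which some fixed update rule $X_*\in\U$ fits, yielding $r_u(\eta)\ge 1$ and hence $A^{\downarrow}(\eta)\ge 1/m$. Producing such an $X_*$ rigorously is not a formal consequence of $y\in\Ss$ alone; I expect one to use the rationality of $y$ (so that $l_y$ has a clean discrete structure, which also dictates $c$) together with the fair hypothesis itself to rule out reachable blocked configurations $\eta\neq[+]$, since any $\eta$ with $A^{\uparrow}(\eta)=A^{\downarrow}(\eta)=0$ would trap the dynamics and force $\tau=\infty$.
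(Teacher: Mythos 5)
Your proposal is essentially the paper's own argument: your potential $\phi(\eta)=N_\eta(2L-N_\eta)$ is, up to the factor $m$ and an affine reparametrization, exactly the function $f(\eta)=h_{k(\eta)}$ with $h_{k+1}-h_k=(L-k)m$ constructed in Proposition \ref{gluing}; your Dynkin/optional-stopping step is Lemma \ref{pairfeps}; and the conclusion via Markov's inequality, with $c>3$ forced by comparing $L^2$ to $L^{c-1}$, is verbatim the proof of Proposition \ref{poltime}. The point you flag as ``the main obstacle'' --- a uniform positive lower bound on the total flip rate at non-absorbed configurations, i.e.\ $\sum_{v\in\eta_t^-}r_v(\eta_t)\ge 1$ whenever $\eta_t\ne[+]$, $t<\tau$ --- is precisely the step the paper uses silently: its generator computation ends with $m\,\mathbb V f(\eta)\le-\bigl(\sum_{v\in\eta^-}r_v(\eta)\bigr)m\le-m$, and the last inequality is exactly your claim, asserted without further justification (it is implicitly tied to the earlier unproved remark that the restricted segment dynamics fixates at $[+]$, i.e.\ that no frozen configuration other than $[+]$ is met along the trajectory). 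So you have not missed an idea that the paper supplies, and your observation that, by fairness, a vanishing $-$-side rate forces all rates to vanish --- so the needed bound is equivalent to ruling out reachable blocked configurations --- is an accurate diagnosis of what both arguments require; verifying it for the concrete families is what the paper's criterion in Proposition \ref{lemadraw} and the examples of Section \ref{Exampless} effectively do.
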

We move the proof of this proposition to the next section. 
This result takes account of the constant $c>1$ in Definition \ref{chuloenbola}, so, it is left to choose a good subset of the stable set; that is the content of the next lemma.
Let us denote the convex hull of a set $S$ by Hull($S$).
\begin{lemma}\label{existeS4}
Given $y\in\Ss$, there exists a finite set $\Ss_4\subset\Ss$ 
such that $y\in\Ss_4$ and $0\in\textup{Hull}(\Ss_4)$.
\end{lemma}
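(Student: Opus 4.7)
The plan is to deduce the lemma from the criticality hypothesis by a short convex-geometric argument, using Carathéodory's theorem at the end. First I would observe that $\Ss$ is a closed subset of $S^1$: since $\U$ consists of finitely many finite update rules, $u\in\Ss$ iff for every $X\in\U$ there exists $x\in X$ with $\langle x,u\rangle\ge 0$, which exhibits $\Ss$ as a finite intersection of finite unions of closed arcs of $S^1$. Consequently $\textup{Hull}(\Ss)$ is a compact convex subset of $\R^2$.

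The crucial step is to show that $0\in\textup{Hull}(\Ss)$. Suppose not; then, since $\textup{Hull}(\Ss)$ is compact and convex, strict separation produces a unit vector $v$ and a constant $c>0$ with $\langle x,v\rangle\ge c$ for every $x\in\textup{Hull}(\Ss)$. In particular $\Ss\subset\{u\in S^1:\langle u,v\rangle>0\}$, so the open semicircle $\{u\in S^1:\langle u,v\rangle<0\}$ is disjoint from $\Ss$, contradicting the criticality condition in Definition \ref{crit} that every open semicircle of $S^1$ meets $\Ss$.

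Having established that $0\in\textup{Hull}(\Ss)$, Carathéodory's theorem in $\R^2$ yields points $u_1,u_2,u_3\in\Ss$ with $0\in\textup{Hull}(\{u_1,u_2,u_3\})$. Setting $\Ss_4:=\{y,u_1,u_2,u_3\}$ gives a finite subset of $\Ss$ containing $y$, and since enlarging a set can only enlarge its convex hull, $0\in\textup{Hull}(\Ss_4)$, which is the desired conclusion. I do not anticipate any real obstacle: once criticality is rephrased as a statement about convex hulls of points on $S^1$, Carathéodory does the rest, and the subscript $4$ in the name $\Ss_4$ presumably just records the typical cardinality (adding $y$ to a triangle of stable directions) rather than imposing any cardinality constraint on the argument.
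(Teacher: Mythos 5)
Your proof is correct for the lemma as stated, but it takes a genuinely different route from the paper. The paper argues directly on the circle: using Lemma \ref{Sclosed} (closedness of $\Ss$) it splits into the cases $-y\in\Ss$ and $-y\notin\Ss$ and in each case picks explicit stable directions ($\Ss_4=\{x,y,-y,z\}$ or $\{x,y,z\}$) so placed that the arcs between consecutive chosen directions are all shorter than a semicircle; in particular $0$ lies in the \emph{interior} of $\textup{Hull}(\Ss_4)$. You instead prove the abstract statement $0\in\textup{Hull}(\Ss)$ by hyperplane separation (the condition in Definition \ref{crit} that every open semicircle meets $\Ss$ is exactly what forbids a separating direction) and then invoke Carath\'eodory to extract three points, finally adjoining $y$; this is clean, and in fact needs even less than you use, since non-strict separation of a point from a convex set already contradicts criticality, so compactness of the hull is not essential. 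The one caveat worth flagging is that Carath\'eodory may return a degenerate triple, e.g.\ two antipodal stable directions $u,-u$ plus an arbitrary third point, in which case $0$ lies only on the boundary of $\textup{Hull}(\Ss_4)$. The lemma as literally stated is still satisfied, but its later use (the proof of Proposition \ref{erosion1} deduces finiteness of $(\Ss_4,L)$-droplets from $0\in\textup{Hull}(\Ss_4)$) really wants $\Ss_4$ not to be contained in a closed half-plane, which the paper's explicit construction delivers. Your argument upgrades easily: criticality says precisely that for every $w\ne 0$ there is $u\in\Ss$ with $\langle u,w\rangle<0$, i.e.\ $0$ is an interior point of $\textup{Hull}(\Ss)$, and then Steinitz's theorem (or the one-line fix of adding, when the triple is degenerate, a stable direction from each of the two open semicircles determined by the degenerate line) produces at most four stable directions with $0$ in the interior of their hull --- which also explains the subscript in the name $\Ss_4$.
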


Before proving this lemma, we introduce some useful information about the structure of $\Ss$. Write $[u,v]$ for the closed interval of directions between $u$ and $v$
(also $(u,v)$ for the open interval). Say $[u,v]$ is {\it rational} if both $u$ and $v$ are rational
directions. Our choice of $\Ss_4$ will depend on the following lemma.
\begin{lemma}\label{Sclosed}
 The stable set $\Ss$ is a finite union of rational closed intervals of $S^1$.
\end{lemma}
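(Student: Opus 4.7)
The plan is to describe $\Ss$ as a finite intersection of closed arcs whose endpoints are rational directions, and then argue that any such intersection is a finite union of rational closed arcs.

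For each update rule $X \in \UU$, set
\[U_X:=\{u\in S^1 : X\subset \mathbb{H}_u\} = \bigcap_{x\in X}\{u\in S^1 : \langle x,u\rangle<0\}.\]
The first observation is that $U_X$ is a (possibly empty) open arc of $S^1$. Indeed, each factor $\{u\in S^1:\langle x,u\rangle<0\}$ is an open half-circle (arc of length $\pi$) whose two boundary points are the unit vectors perpendicular to $x$. Since $X\subset \Z^2\setminus\{0\}$, each such perpendicular direction is rational. The intersection over $x\in X$ is the trace on $S^1$ of an open convex cone in $\R^2$, hence either empty or a single connected open arc; in either case its two endpoints (if any) lie in the finite set $\bigcup_{x\in X}\{u\in S^1:\langle x,u\rangle=0\}$, which consists of rational directions.

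Now by Definition \ref{crit},
\[\Ss=\bigcap_{X\in\UU}\bigl(S^1\setminus U_X\bigr).\]
Each $S^1\setminus U_X$ is either all of $S^1$ (if $U_X=\emptyset$) or a closed arc of $S^1$ with rational endpoints. Thus $\Ss$ is the complement in $S^1$ of the finite union $V:=\bigcup_{X\in\UU}U_X$, which is an open subset of $S^1$ obtained as a finite union of open arcs with rational endpoints. Any finite union of open arcs of $S^1$ is itself a finite disjoint union of open arcs (coalesce arcs that overlap), and all endpoints produced in this coalescence come from the original endpoints, hence remain rational. Taking complements in $S^1$, we conclude that $\Ss$ is a finite disjoint union of closed arcs (allowing degenerate ones consisting of a single point) whose endpoints are rational, as desired.

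The only conceptual step that needs care is the claim that $U_X$ is connected; this follows from convexity of the corresponding cone in $\R^2$, and I do not anticipate any real obstacle in writing out the details. Everything else is standard topology on $S^1$ together with the remark that vectors in $\Z^2\setminus\{0\}$ have rational perpendicular directions.
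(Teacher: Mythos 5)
Your argument is correct and self-contained. Note that the paper does not actually prove Lemma \ref{Sclosed}: its ``proof'' is the citation to \cite{BSU15}, and your decomposition is precisely the standard argument behind that cited result. Writing $\Ss$ as the complement of $\bigcup_{X\in\UU}U_X$ with $U_X=\{u\in S^1:X\subset\mathbb{H}_u\}$ is exactly Definition \ref{crit}; each $U_X$ is the trace on $S^1$ of the open convex cone $\{v:\langle x,v\rangle<0\ \forall x\in X\}$, hence empty or a single open arc contained in a half-circle, with endpoints among the directions perpendicular to the (integer, nonzero) elements of $X$, which are rational; and boundary points of a finite union of arcs lie in the union of the individual boundaries, so the complement is a finite union of closed arcs (possibly degenerate points) with rational endpoints. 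The only loose ends are trivial: if $U_X=\emptyset$ for every $X$ (e.g.\ when $0\in\textup{Hull}(X)$ for all rules) then $\Ss=S^1$, which is still a union of two rational closed intervals; and if the arcs $U_X$ cover $S^1$ then $\Ss=\emptyset$, an empty union. Neither affects the conclusion, so your proof can stand in place of the citation.
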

\begin{proof}
See \cite{BSU15}.
\end{proof}

With this tool, now we prove that in fact the set $\Ss_4$ in Lemma \ref{existeS4} can be chosen of size 3 or 4 (thus, justifying the subindex 4). 
\begin{proof}[Proof of Lemma \ref{existeS4}] Let $y\in\Ss_4$ by definition. 
If $-y\in \Ss$ since $\UU$ is critical we can choose $x\in(y,-y)\cap\Ss$,
$z\in(-y,y)\cap\Ss$ and set $\Ss_4=\{x,y,-y,z\}$.

If $-y\notin \Ss$, then take $x\in\Ss$ in the open semicircle opposite to $ y$,
we can suppose wlog that $x\in( y,- y)$.
\begin{figure}[ht]
  \centering
    \includegraphics[scale=.7]{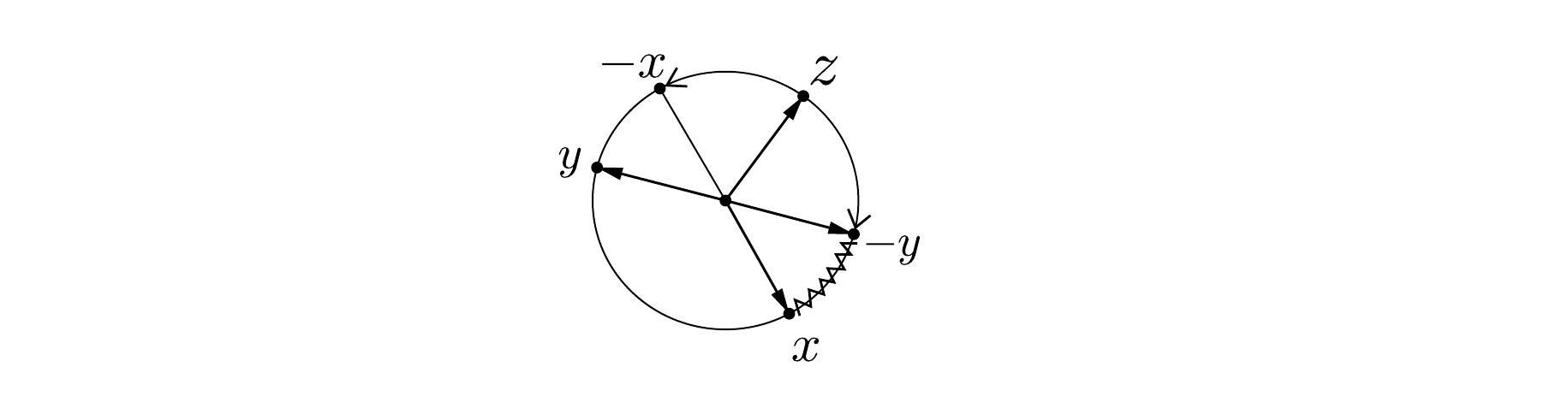}
  \caption{3 or 4 stable directions}
   \label{3-4stabledir}
\end{figure}

Moreover, since $\Ss$ is closed by Lemma \ref{Sclosed}, we can choose $x$ such that $\Ss\cap[x,- y]=\{x\}$ (see Figure \ref{3-4stabledir}).
Then select $z\in\Ss\cap(x,-x)$ and observe that in fact $z\in\Ss\cap(- y,-x)$,
so define $\Ss_4=\{x,y,z\}$. In both cases, $0\in \text{Hull}(\Ss_4)$.
\end{proof}

By combining the previous results, we can prove that every family with fair directions is voter-eroding.
\begin{prop}\label{erosion1}  
If there is a fair direction, then there exist a constant $c>1$, and a set $\Ss_4\subset\Ss$ such that for every $(\Ss_4,L)$-droplet $D$ and every $t\ge 0$,
 \begin{equation}
  \p_p[T(D)> tL^{c-1}]\le Le^{-t},
 \end{equation}  
 when $L$ is large enough. In particular, $\UU$ is $(c+\varepsilon,\Ss_4)$-eroding, for any $\varepsilon>0$.
\end{prop}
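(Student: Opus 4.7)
The plan is to reduce the erosion of a $(\Ss_4,L)$-droplet to $N\le L$ successive one-dimensional segment erosions, each governed by Proposition~\ref{poltime}, and then assemble the bounds via iteration and a union bound. Let $c_0>3$ be the exponent supplied by Proposition~\ref{poltime} for the fair direction $y$, and set $c:=c_0+1>1$. Applying Lemma~\ref{existeS4} produces a finite set $\Ss_4\subset\Ss$ with $y\in\Ss_4$ and $0\in\mathrm{Hull}(\Ss_4)$, so every $(\Ss_4,L)$-droplet is finite of diameter at most $L$. Fix such a droplet $D$ and run the dynamics with $D$ initially in state $-$ and $\Zdd\setminus D$ initially in state $+$. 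Since every $u\in\Ss_4$ is stable, no update rule applied at a $+$ site outside $D$ can ever turn it to $-$, so $\Zdd\setminus D$ remains frozen at $+$ and only the internal evolution on $D$ matters.

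Next, I foliate $D$ by the discrete lines parallel to $l_y$, producing ordered layers $Y_1,Y_2,\dots,Y_N$ of length at most $L$, numbered from the face of $D$ determined by $y$ inwards; the diameter bound gives $N\le L$. Set $T_0:=0$ and $T_k:=\inf\{t\ge T_{k-1}:\eta_t(v)=+\ \forall\,v\in Y_1\cup\cdots\cup Y_k\}$. A first remark, leaning on $y\in\Ss$, is that once $Y_j$ becomes fully $+$ it stays so: if some rule $X\in\UU$ flipped a site $v\in Y_j$ back to $-$ at time $t\ge T_j$, then $v+X$ would have to lie entirely in the open half-plane strictly below $Y_j$'s line, i.e.\ $X\subset\mathbb H_y$, contradicting stability of $y$. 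Hence above $Y_k$ is frozen at $+$ from time $T_{k-1}$ onward, exactly as in the 1D setup of Section~\ref{choosey}. Below $Y_k$, the real configuration has $+$ spins outside $D$ and a mixture in the evolving lower layers, which by the monotonicity of the $\UU$-voter dynamics only accelerates erosion of $Y_k$ compared with the 1D setup, where the entire half-plane below $Y_k$ is frozen at $-$. The strong Markov property then yields that the conditional law of $T_k-T_{k-1}$ given $\mathcal F_{T_{k-1}}$ is stochastically dominated by the law of $\tau(y,L)$; iterating, $T(D)=T_N$ is stochastically dominated by a sum of $N\le L$ independent copies $\tau_1',\dots,\tau_N'$ of $\tau(y,L)$.

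Combining Proposition~\ref{poltime} with the Markov property and the monotonicity $\eta_t\ge[-]$ (so restarting the 1D dynamics from the current state is at least as fast as restarting from $[-]$) gives $\p[\tau(y,L)>sL^{c_0-1}]\le e^{-s}$ for every integer $s\ge 1$, and hence, up to inflating absolute constants, for every real $s\ge 0$. Since $T(D)>tL^{c-1}=tL\cdot L^{c_0-1}$ together with $N\le L$ forces $\max_k\tau_k'>tL^{c_0-1}$, a union bound gives
\[
\p_p[T(D)>tL^{c-1}]\le N\,\p[\tau(y,L)>tL^{c_0-1}]\le Le^{-t}.
\]
Substituting $t=L^{\varepsilon}$ yields $\p_p[T(D)>L^{c+\varepsilon}]\le Le^{-L^{\varepsilon}}\le e^{-L}$ for all $L$ large enough, establishing that $\UU$ is $(c+\varepsilon,\Ss_4)$-eroding.

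The main technical obstacle is the monotone coupling in the layer-by-layer reduction: for each $k$ one must carefully justify that replacing the evolving configuration below $Y_k$ by a half-plane of frozen $-$ spins is pessimistic for the flipping of any site of $Y_k$ to $+$. This rests on the monotonicity of the $\UU$-voter dynamics emphasized in Section~\ref{Uvoter}: adding $+$ spins to the current configuration can only increase the rate at which $-$ sites of $Y_k$ flip to $+$, so the 1D erosion time $\tau(y,L)$ from Proposition~\ref{poltime} is indeed a valid distributional upper bound for each layer's erosion time, and the remaining steps are routine.
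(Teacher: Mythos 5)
Your proof follows essentially the same route as the paper: foliate the droplet into at most $L$ discrete lines parallel to $l_y$, erode them from the top down, dominate each layer's erosion time by the one-dimensional segment time $\tau(y,L)$ via monotonicity of the boundary conditions, and conclude with Proposition~\ref{poltime}, the Markov property, and a union bound over the $\le L$ layers. You are in fact a bit more careful than the paper about keeping the two exponents apart: writing $c=c_0+1$ where $c_0>3$ comes from Proposition~\ref{poltime} is the right way to reconcile the bound $\p[T_{\mathrm{top}}>tL^{c_0-1}]\le e^{-t}$ with the claimed $\p_p[T(D)>tL^{c-1}]\le Le^{-t}$, which the paper glosses over by reusing the letter $c$. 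Your phrasing via $T_k$ and stochastic domination by a sum of independent copies of $\tau(y,L)$ is a slight repackaging of the paper's explicit coupled dynamics (flip only one row at a time), but it requires the same two monotonicity inputs and is correct.

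One small but genuine slip at the very end: substituting $t=L^{\varepsilon}$ does not work. First, $tL^{c-1}=L^{c-1+\varepsilon}$, not $L^{c+\varepsilon}$; second, for $\varepsilon<1$ the resulting bound $Le^{-L^{\varepsilon}}$ is \emph{not} $\le e^{-L}$ for large $L$. To deduce the $(c+\varepsilon,\Ss_4)$-eroding property you should take $t=L^{1+\varepsilon}$ (or simply $t=2L$, say): then $tL^{c-1}\le L^{c+\varepsilon}$ for $L$ large, and $Le^{-t}\le e^{-L}$, which gives exactly what Definition~\ref{chuloenbola} requires. With that correction the argument is complete.
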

\begin{proof}
To fix ideas, we can assume that $y=(0,1)$ is a fair direction. Consider the set $\Ss_4$ containing $y$, given by Lemma \ref{existeS4} and any $(\Ss_4,L)$-droplet $D$. 
Note that $D$ is finite because $0\in \textup{Hull}(\Ss_4)$, 
hence $T(D)$ is well defined.

We couple the dynamics with the following one:
We first allow to flip just the vertices in the first (top) line of the droplet, then, when they are all in state $+$, we allow
to flip just the vertices in the second line, and so on until we arrive at the bottom line.
This coupled dynamic dominates the original one by monotonicity, and since the height of the droplet is at most $L$, then it is enough to show that for all $t\ge 0$,
\begin{equation}\label{cgama}
  \p_p[T_{\textup{top}}> tL^{c-1}]\le e^{-t},
 \end{equation}
 where $T_{\textup{top}}$ is the time to erode the top line in the coupled dynamics; we finish the proof by applying the union bound over all rows of $D$. 
 
 Moreover, we can assume that this top line has
 $L$ vertices, 
 since all lines have at most
 $L$ vertices and having less vertices only helps to erode faster. 
 To this end, let us consider the 1-dimensional process
 $\eta_t$ given in Definition \ref{etat}; because of the boundary conditions, it follows that $T_{\textup{top}}\le\tau$ in distribution, thus, by Proposition \ref{poltime},
 \[\p_p[T_{\textup{top}}> L^{c-1}] \le \p[\tau > L^{c-1}]\le e^{-1}.\]
 Finally, by the Markov property it follows \eqref{cgama}.
\end{proof}

\subsection{A martingale argument}
In this section we prove Proposition \ref{poltime}. To do so, we use Markov's inequality $\p[\tau>s]\le \e[\tau]/s.$
The first step is to show that,
if we can find a function on $\{+,-\}^Y$ providing a bias in favor to the vertices in state $+$ in the dynamics $\eta_t$ (see Definition \ref{etat}), then we can bound $\e[\tau]$ in terms of $f$ and the extreme configurations $[-]$ and $[+]$.

Since $Y=Y(y,L)$ has $L$ vertices can identify it
with the initial segment $[L]$, so we can write the generator for the 1-dimensional process $\eta_t$ as
\[\mathbb V f(\eta)=\sum_{v=1}^L\frac{r_v(\eta)}{m}[f(\eta^v)-f(\eta)].\]
\begin{lemma}\label{pairfeps}
Suppose there exists a function $f:\{+,-\}^L\to\R$ such that $\mathbb V f(\eta_t)\le -1$ for all $t<\tau$, then
\end{lemma}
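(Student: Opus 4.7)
The plan is a standard Dynkin/optional-stopping argument. Since $(\eta_t)_{t\ge 0}$ is a continuous-time Markov chain on the finite state space $\{+,-\}^Y$ with generator $\mathbb V$, the process
\[
M_t := f(\eta_t) - f(\eta_0) - \int_0^t \mathbb V f(\eta_s)\,ds
\]
is a martingale (with $M_0=0$) with respect to its natural filtration. I expect the conclusion of the lemma to be $\e[\tau]\le f([-]) - f([+])$, and the goal of the proof is to extract this from the above martingale.

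First I would verify that $\tau<\infty$ almost surely and in fact $\e[\tau]<\infty$. The configuration $[+]$ is absorbing for the restricted dynamics: since $y\in\Ss$, no update rule $X\in\U$ is contained in $\mathbb H_y$, so when $\eta_t=[+]$ no vertex of $Y$ sees a rule whose translate is entirely in state $-$. Combined with the finiteness of $\{+,-\}^Y$ and a uniform lower bound on the probability of reaching $[+]$ from any reachable configuration in a bounded time window, this yields a geometric tail on $\tau$.

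Next I would apply optional stopping to $M$ at the bounded time $\tau\wedge n$, which gives
\[
\e[f(\eta_{\tau\wedge n})] - f([-]) \;=\; \e\!\left[\int_0^{\tau\wedge n}\mathbb V f(\eta_s)\,ds\right] \;\le\; -\,\e[\tau\wedge n],
\]
where the inequality is the pathwise hypothesis $\mathbb V f(\eta_s)\le -1$ for $s<\tau$. Letting $n\to\infty$, the right-hand side tends to $-\e[\tau]$ by monotone convergence, while on the left $\eta_{\tau\wedge n}\to\eta_\tau=[+]$ almost surely and $f$ is bounded on the finite state space, so bounded convergence yields $\e[f(\eta_{\tau\wedge n})]\to f([+])$. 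Rearranging produces the expected inequality
\[
\e[\tau]\;\le\; f([-]) - f([+]).
\]

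The substantive difficulty does not lie in this martingale computation but in exhibiting such an $f$ with a sufficiently small gap $f([-])-f([+])$, which is where the fair-direction hypothesis (and in particular the one-dimensional bias condition) will be used in the subsequent lemmas to feed into Proposition~\ref{poltime} through Markov's inequality. The present lemma is a clean application of Dynkin's formula and optional stopping, so I foresee no serious obstacle in executing it.
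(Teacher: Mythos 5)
Your proof is correct and takes essentially the same route as the paper: both construct the Dynkin martingale $M_t=f(\eta_t)-\int_0^t\mathbb V f(\eta_s)\,ds$ and apply optional stopping together with the hypothesis $\mathbb V f(\eta_s)\le -1$ for $s<\tau$ to obtain $\e[\tau]\le f([-])-f([+])$. You are in fact slightly more careful than the paper, since you truncate at $\tau\wedge n$ and justify the passage to the limit, whereas the paper applies optional stopping directly at $\tau$.
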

\vspace{-.5cm}
\begin{equation}\label{+-polinomial}
\e[\tau]\le f([-])-f([+]).
\end{equation}
\begin{proof}
Consider the martingale $M_t=f(\eta_t)-\int_0^t\mathbb V f(\eta_s)\,ds$. By optional stopping we have
\begin{align*}
 f([-])&=\e[M_0] =\e[M_{\tau}]
=\e\left[f(\eta_{\tau})-\int_0^{\tau}\mathbb V f(\eta_s)\,ds\right]\\
 &\ge \e[f(\eta_{\tau})]+\e\left[\int_0^{\tau} 1\,ds\right]
 = f([+])+\e[\tau],
\end{align*}
and the result follows.
\end{proof}

In order to apply this result, the next step is to show that if $y$ is a fair direction then we can define an explicit function $f$ such that the variation $f([-])-f([+])$ is polynomial in $L$.
\begin{prop}\label{gluing}
If $y$ is a fair direction then there exists a function $f$ satisfying the
 hypothesis in Lemma \ref{pairfeps} such that RHS in (\ref{+-polinomial}) is $O(L^2)$.
\end{prop}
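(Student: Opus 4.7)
The plan is to exhibit an explicit quadratic function of $a:=|\eta^-|$ and verify directly that it satisfies the hypothesis of Lemma~\ref{pairfeps}. The key point is the following asymmetry trick: the fairness inequality $\sum_{v\in\eta^+}r_v\le\sum_{v\in\eta^-}r_v$ only makes $a$ a supermartingale, so a linear $f$ would give generator drift $\le 0$ but not $\le -1$; using a concave quadratic, the one-step differences $f(a+1)-f(a)$ and $f(a-1)-f(a)$ become asymmetric by a constant additive amount of size $m$, and that extra asymmetry extracts a strict negative drift.

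Concretely, I will set
\[
f(\eta) \;:=\; \frac{m}{2}\,a(2L+1-a), \qquad a := |\eta^-|.
\]
This makes $f([-])=\tfrac{m}{2}L(L+1)$ and $f([+])=0$, so $f([-])-f([+])=O(L^2)$ as required. Direct computation gives
\[
f(a+1)-f(a) = m(L-a),\qquad f(a-1)-f(a) = -m(L-a+1),
\]
so, writing $A(\eta):=\sum_{v\in\eta^+}r_v(\eta)$ and $B(\eta):=\sum_{v\in\eta^-}r_v(\eta)$,
\[
\mathbb V f(\eta) \;=\; (L-a)A(\eta) - (L-a+1)B(\eta) \;=\; (L-a)(A-B) - B.
\]
Since $a\le L$ gives $L-a\ge 0$ and fairness gives $A(\eta_t)\le B(\eta_t)$, the first summand is $\le 0$, hence $\mathbb V f(\eta_t)\le -B(\eta_t)$.

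To conclude $\mathbb V f(\eta_t)\le -1$ for $t<\tau$, it then suffices to check that $B(\eta_t)\ge 1$. Since $\eta_t\ne[+]$ and the process reaches $[+]$ in finite time almost surely, some transition from $\eta_t$ must be available, i.e.\ $A(\eta_t)+B(\eta_t)\ge 1$; together with fairness and integrality this forces $B(\eta_t)\ge 1$, and the proposition follows. The main obstacle I foresee is ruling out an intermediate absorbing configuration $\eta^*\ne[+]$ at which $A(\eta^*)=B(\eta^*)=0$ (at such a state $\mathbb V f=0$ and the hypothesis of Lemma~\ref{pairfeps} fails); I expect this to follow from a short separate check using the frozen-$+$ exterior of $Y$ on $l_y$ together with the stability of $y$, which guarantees that whenever $|\eta^-|\ge 1$ some $-$ vertex near an endpoint of $Y$ has at least one rule $X$ with $v+X$ entirely in the frozen $+$ region, hence $B(\eta)\ge 1$.
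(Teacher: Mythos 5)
Your $f$ is exactly the paper's function: summing the paper's recursion $h_{k+1}=h_k+(L-k)m$ gives $h_k=\tfrac{m}{2}k(2L+1-k)$, which is your $\tfrac m2 a(2L+1-a)$ with $a=k$, and the generator computation (one-step increments $m(L-a)$ and $-m(L-a+1)$, then fairness to cancel the $+$ contribution) is line-for-line the paper's. The one subtlety you correctly isolate, namely that $\mathbb V f(\eta_t)\le -1$ requires $\sum_{u\in\eta^-_t}r_u(\eta_t)\ge 1$ for $\eta_t\neq[+]$, is used silently by the paper in the final step $-\bigl(\sum_{v\in\eta^-}r_v(\eta)\bigr)m\le -m$, so this is a shared tacit assumption rather than a new gap you are introducing. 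Do note, however, that your first suggested justification (``the process reaches $[+]$ in finite time a.s.'') is circular here: $\tau<\infty$ a.s.\ is precisely what the bound $\e[\tau]\le f([-])-f([+])$ from Lemma~\ref{pairfeps} is supposed to deliver, so it cannot be invoked to verify that lemma's hypothesis; the honest route is the deterministic endpoint check you sketch at the very end (that some $-$ vertex of $Y$ adjacent to the frozen $+$ region always has an available erasing rule because $y\in\Ss$), which the paper also takes for granted when it states that the segment fixates at $[+]$.
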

\begin{proof}
Set $h_0=0$ and for $k=0,1,\dots,L-1$ consider the sequence
\[h_{k+1}=h_k+(L-k)m.\] 
Then, define the function $f$  
as follows: given $\eta\in\{+,-\}^L$ with $k=k(\eta)$ vertices in state $-$ we set $f(\eta)=h_k$. Observe that

\begin{align*}
f([-])-f([+])
=\sum_{k=0}^{L-1}[h_{k+1}-h_k]
=\sum_{k=0}^{L-1}(L-k)m 
= O(L^2).
\end{align*}
Moreover, given $\eta=\eta_t$ with $k=k(\eta)\ge 1$ vertices in state $-$ we have
\begin{align*}
m[\mathbb V f](\eta) &=\sum_{v=1}^Lr_v(\eta)[f(\eta^v)-f(\eta)]\\
& =-\sum_{v\in\eta^-}r_v(\eta)[f(\eta)-f(\eta^v)]+ \sum_{v\in\eta^+}r_v(\eta)[f(\eta^v)-f(\eta)]\\
& =-\sum_{v\in\eta^-}r_v(\eta)[h_k-h_{k-1}]+ \sum_{v\in\eta^+}r_v(\eta)[h_{k+1}-h_k]\\
& =-\sum_{v\in\eta^-}r_v(\eta)[m(L-(k-1))]+ \sum_{v\in\eta^+}r_v(\eta)[m(L-k)]\\
&\le -\sum_{v\in\eta^-}r_v(\eta)m(L-(k-1))+ \sum_{v\in\eta^-}r_v(\eta)m(L-k)\\
&= -\left(\sum_{v\in\eta^-}r_v(\eta)\right)m
\le -m.
\end{align*}
So $\mathbb V f(\eta)\le -1$ for all $\eta\ne[+]$ and we are done.
\end{proof}

Finally, we are ready to conclude.
\begin{proof}[Proof of Proposition \ref{poltime}]
 If there is a fair direction $y$ then apply Proposition \ref{gluing} and then Lemma \ref{pairfeps} to get
 \[\e[\tau]=O(L^2)\le e^{-1} L^{c-1},\] for any constant $c>3$, and for $L$ large enough, so by applying Markov's inequality we are finished.
\end{proof}

\subsection{Examples}\label{Exampless}
It is easy to show that $\mathcal N_2^2$ and Duarte model (discussed in the Introduction) verify Condition (\ref{presym}). In fact,
$y=e_2$ is a fair direction for both families, and all configurations $\eta_t$ in the 1-dimensional dynamics are of the form
\[\eta_t=[+,\cdots,+,-,\cdots,-,+,\cdots,+],\]
this means, a block of $+$s followed by a block of $-$s followed by another block of $+$s; the right-most block of $+$s being empty for the Duarte family, and either block of $+$s (left or right) could be empty for $\mathcal N_2^2$.

To fix ideas, let us consider the family $\mathcal N_2^2$ and assume that only the right-most block is empty, thus
\[\sum\limits_{v\in\eta^+_t}r_v(\eta_t)= 1,
\textup{ and } 2
\le \sum_{u\in\eta^-_t}r_u(\eta_t).\]
The other configurations and Duarte family can be checked in a similar way. Therefore, inequality (\ref{presym}) holds. 

Now, we give a criterion (that only depends on the family $\UU$) which allows us to check if there exists a fair direction just by drawing the rules of $\U$ in $\Zdd$.
Given $y\in \Ss$, consider the line $l_y:=\{x\in\Zdd:\langle x,y\rangle=0\}$. 
\begin{prop}
If there exists a rational direction $y\in\Ss$ satisfying:
 \begin{enumerate}\label{lemadraw}
  \item[(a)] each $X\in\UU$ has either, at most 1 vertex in $l_{y}$, or at least 1 vertex in $\mathbb H_{-y}$, and
  \item[(b)] there exists an injective function $g:\U\to \U$ such that, $X\in\UU$ and $X\subset\mathbb H_{y}\cup\{x\}$ for some $x\in l_y$ implies $g(X)\subset\mathbb H_{-y}\cup\{-x\}$,
 \end{enumerate}
 then $y$ is a fair direction.
\end{prop}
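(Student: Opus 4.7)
The plan is to verify the fairness inequality \eqref{presym} at every time $t$ and length $L$ by constructing an injection
\[
\phi : \bigl\{(v,X) : v \in \eta_t^+,\ X \in \U,\ X \text{ disagrees with } v\bigr\}
\hookrightarrow
\bigl\{(u,X') : u \in \eta_t^-,\ X' \in \U,\ X' \text{ disagrees with } u\bigr\},
\]
so that the two sides of \eqref{presym} are exactly the cardinalities of the domain and codomain of $\phi$.

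To describe the domain, first note that for any $v \in Y \subset l_y$ the only $-$-vertices in $\Zdd$ are those of $\mathbb{H}_y$ (frozen) together with $\eta_t^- \subset Y$. Consequently, any rule $X$ disagreeing with $v \in \eta_t^+$ satisfies $v + X \subset \mathbb{H}_y \cup Y$, and translating by $-v \in l_y$ this becomes $X \subset \mathbb{H}_y \cup l_y$. Combined with hypothesis (a), this rules out vertices of $X$ in $\mathbb{H}_{-y}$ and forces $|X \cap l_y| \leq 1$; combined with $y \in \Ss$, which forbids $X \subset \mathbb{H}_y$, it further forces $X \cap l_y$ to be a singleton $\{x_X\}$. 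Moreover, $v + x_X$ lies on $l_y$ and is in state $-$, so necessarily belongs to $Y$ and in fact to $\eta_t^-$.

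With $x_X$ now determined by $X$, I would define
\[
\phi(v, X) := \bigl(v + x_X,\ g(X)\bigr).
\]
Applying hypothesis (b) to $X \subset \mathbb{H}_y \cup \{x_X\}$ yields $g(X) \subset \mathbb{H}_{-y} \cup \{-x_X\}$, and therefore
\[
(v + x_X) + g(X) \subset \mathbb{H}_{-y} \cup \{v\}.
\]
Since $\mathbb{H}_{-y}$ lies entirely in the $+$-frozen region $\Zdd \setminus (\mathbb{H}_y \cup Y)$ and $v \in \eta_t^+$, this set is in state $+$ throughout, so $g(X)$ disagrees with the $-$ vertex $v + x_X$, and $\phi(v,X)$ lies in the codomain.

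Finally, injectivity of $\phi$ follows at once from injectivity of $g$: if $\phi(v, X) = \phi(v', X')$, then $g(X) = g(X')$ forces $X = X'$, whence $x_X = x_{X'}$, and then $v + x_X = v' + x_{X'}$ gives $v = v'$. Comparing the cardinalities of the domain and codomain of $\phi$ yields \eqref{presym}, showing that $y$ is a fair direction. The only genuinely geometric step is the observation that stability of $y$ eliminates the potential case $X \cap l_y = \emptyset$ (which would otherwise force us to assign each $+$-vertex a distinct $-$-partner without help from $x_X$); everything else is straightforward bookkeeping against the structure of the frozen boundary.
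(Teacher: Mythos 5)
Your proof is correct and follows essentially the same route as the paper: both arguments exhibit the injection $(v,X)\mapsto(v+x_X,g(X))$ from the $+$-side contributing pairs into the $-$-side ones, using $y\in\Ss$ together with hypothesis (a) to force $X\cap l_y$ to be a singleton and hypothesis (b) to produce a disagreeing rule at the partner vertex. Your observation that $x_X$ is intrinsic to $X$ makes the injectivity step a touch more transparent than the paper's version, but the underlying argument is the same.
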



\begin{proof}
 In fact, fix $L\in\mathbb N$, and consider any configuration in $\eta\in\{+,-\}^{Y}$.
 We will show that
\begin{equation}\label{presym2}
 \sum\limits_{v\in\eta^+}r_v(\eta)\le \sum_{u\in\eta^-}r_u(\eta).
 \end{equation}
 
 First of all, note that rules $X$ having at least 1 vertex in $\mathbb H_{-y}$ do not contribute to the LHS of (\ref{presym2}).
 If $(v,X)\in\eta^+\times\UU$ is counted in LHS of (\ref{presym2}),
 this is because $v\in\eta^+$ and $X$ disagrees with $v$, hence $v+X$ is entirely $-$. Moreover, since
 $y\in\Ss$, the set $v+X$ must have a vertex $u=u(v)\in\eta^-$, and only 1 by {\it (a)}, so
 $v+X\subset\mathbb H_{y}\cup\{u\}$ or $X\subset\mathbb H_{y}\cup\{u-v\}$.
 
 Now, by {\it (b)}, $g(X)\subset\mathbb H_{-y}\cup\{v-u\}$, or $u+g(X)\subset\mathbb H_{-y}\cup\{v\}$, thus, $u+g(X)$ in entirely $+$, 
 meaning that  $g(X)$ disagrees with $u$ and $(u,g(X))\in\eta^-\times\UU$ is counted in RHS of (\ref{presym2}).
 Thus, in order to prove that inequality (\ref{presym2}) holds, it is enough to check that for each pair $(v,X)$ that contributes 1 in LHS we can find a contribution of 1 in RHS in a one to one way, in other words, that the map $g':(v,X)\mapsto (u,g(X))$ is an injection. 
 
 In fact, if $g'(v_1,X_1)=g'(v_2,X_2)$, then $u(v_1)=u(v_2)$, and $X_1=X_2$ (since $g$ is injective). So, $X_1\subset\mathbb H_{y}\cup\{u(v_1)-v_1\}$ and $X_1\subset\mathbb H_{y}\cup\{u(v_1)-v_2\}$. Finally, by {\it (a)} we conclude that $v_1=v_2$ and $g'$ is injective.
\end{proof}

Observe that Condition {\it (b)} (but not {\it (a)} in general) holds for symmetric families (i.e., when $X\in\UU$ implies $-X\in\UU$),
because $g(X)=-X$ works.
By using this proposition, we are able to construct a large class of families having $e_2$ (wlog) as a fair direction, as follows.
\begin{exam}
Fix a two-dimensional family $\UU'$  satisfying
\begin{enumerate}
 \item[(I)] $\forall X\in\UU', X\cap\mathbb H_{-e_2}\ne\emptyset$ and $0\in$\,Hull$(X)$,
 \end{enumerate}
and let $\mathcal V,\mathcal W$ be 1-dimensional families such that either
 \begin{enumerate}
 \item[(II)] $0<\nu_- \le w_+$ and
 $w_-\le \nu_+$, or
 \item[(II')]
 $\mathcal V=\emptyset$ and $w_+w_->0$,
\end{enumerate}
where $\nu_*$ (resp. $w_*$) denotes the number of rules of $\mathcal V$ (resp. $\mathcal W$) entirely contained in $\mathbb Z_*$ ($\mathbb Z_+=\mathbb N$ and $\mathbb Z_-=-\mathbb N$).
Then, for every $i\in\mathbb Z_+$, the following {induced} two-dimensional family is voter-eroding:
\[\UU_i(\UU',\mathcal V,\mathcal W):=\UU'
\cup\bigcup_{R\in\mathcal V}\{X(i,R)\}\cup\bigcup_{R\in\mathcal W}\{X(-i,R)\},\]
where $X(\pm i,R)$ denotes the rule $\{\pm ie_1\}\cup\{re_2:r\in R\}$.

In fact, $\UU_i(\UU',\mathcal V,\mathcal W)$ satisfies {\it (a)}, since either $ie_1$, or $-ie_1$, is the only vertex in $l_{e_2}$ and in some rule $X(\pm ie_1, R)$ at the same time.
Condition {\it (b)} is also satisfied, since we can take $g(X)=X$ for $X\in \U'$, and it is easy to see that conditions (II)/(II') ensure that we can make an one to one assignment to those rules $X\notin \U'$. 

Specific examples of families satisfying (I) and (II)/(II') are
\[\UU_1(\{\{1,-1\}\},\ \{\{1\},\{-1\}\},\ \{\{1\},\{-1\}\})= 
\mathcal N_2^2\setminus\{\{e_1,-e_1\}\},\]
and
\[\UU_1(\{\{1,-1\}\},\ \emptyset,\ \{\{1\},\{-1\}\})=
\{\{e_2,-e_2\},\{-e_1,e_2\},\{-e_1,-e_2\}\},\]
which is the same as the Duarte model.

The second example mentioned in the introduction (see Example \ref{easyexamples}) is very similar to $\UU_2(\UU',\mathcal V,\mathcal W)$, with $\UU'=\{(1,1), -e_2\}$,  $\mathcal V=\{\{-3,-2,-1\},\{3,5\}\}$ and $\mathcal W=\{\{-4\},\{1\}\}$.
Indeed, note that the above proposition also implies that given $i_1,\dots,i_k\in\mathbb Z_+$, the family
\[\UU_{i_1}(\UU'_1,\mathcal V_1,\mathcal W_1) 
\cup\cdots\cup
\UU_{i_k}(\UU'_k,\mathcal V_k,\mathcal W_k)\]
is voter-eroding, as soon as $\UU'_j,\mathcal V_j,\mathcal W_j$ satisfy (I) and (II)/(II') for each $j\le k$.
\end{exam}

On the other hand, we are free to construct plenty of examples of different nature which satisfy conditions {\it (a)} and {\it (b)}, by using the following trivial observation.
\begin{rema}[Adding rules]\label{addrules}
 Infinitely many families with a fair direction $y$ can be constructed from a single family $\UU$, just by properly adding new rules $X\subset\mathbb H_{-y}$.
 Moreover, 
 if $\UU$ critical, then the new families can be chosen critical as well, we just need to add new rules carefully, without modifying (too much) the stable set (see Remark \ref{addrules2}). 
\end{rema}

\subsubsection*{No fair directions}
For a concrete example of critical families without fair directions consider the collection of all subsets of size 3 of
\[\{\pm e_1,\pm e_2,\pm 2e_1,\pm 2e_2\},\] 
call it $\UU_{3,8}$. It is easy to check that
$\Ss(\UU_{3,8})=\{\pm e_1,\pm e_2\}$.
To check that no direction in $\Ss(\UU_{3,8})$ is fair, by symmetry it is enough to consider $y=e_2$; in the 1-dimensional setting,
calculations show that configurations $\eta$ of the form
\[\eta=[-,+,+,+,+,-,+,+,+,+,-,+,+,+,+,-,
\cdots,
+,+,+,+,-],\]
which alternate four $+$s and one $-$, do not satisfy (\ref{presym}).
However, simulations indicate that  
we can erode the segment perpendicular to $y$ in time $O(L^{2.2})$, which would mean that $\Ss(\UU_{3,8})$ is voter-eroding. 

 Another such family, which is special since its droplets are triangular, is
\[\UU_\rhd=\{\{(-1,1),(-1,-1)\}, \{(0,1),(1,1)\}, \{(0,-1),(1,-1)\}\},\]
with $\Ss(\UU_\rhd)=\left\{-e_1, \frac{1}{\sqrt 2}(1,1), \frac{1}{\sqrt 2}(1,-1)\right\}$ (see Figure \ref{nonpres}).
\begin{figure}[ht]
  \centering
    \includegraphics[scale=.85]{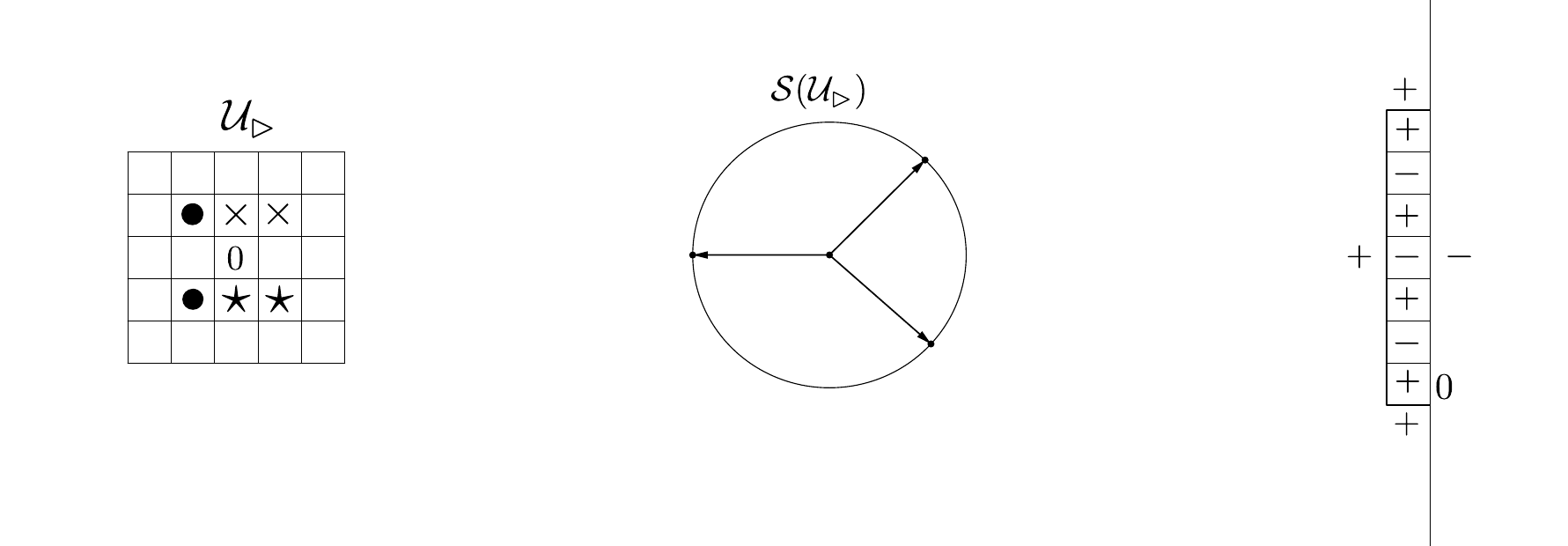}
  \caption{A family without fair directions, its stable set and a failing configuration}
  \label{nonpres}
\end{figure}

This family only has 3 candidates to be $y$ and all of them fail Condition (\ref{presym}).
In fact, say we choose $y=-e_1$ and take $L=2n+1$ for some fixed $n$. The configuration $\eta$ given by \[\eta^+=\{(0,2k):k=0,...,n\},\]
yields $\sum_{u\in\eta^-}r_u(\eta)=n$, while $\sum_{v\in\eta^+}r_v(\eta)=2n$. An analogous situation happens for the other 2 candidates.
On the other hand, simulations suggest that
in fact we can erode the segment perpendicular to $y=-e_1$  in time $O(L^{4/5})$.
  
As a last example, we illustrate how to construct a voter-eroding family from $\UU_\rhd$, in the sense of
Remark \ref{addrules}.
\begin{rema}\label{addrules2}
 The direction $y=-e_1$ is fair for the family \[\UU_\rhd\cup\{\{(-1,2),(-1,-1)\}\},\] this is just because we added a rule in 
 the + side to
 compensate inequality (\ref{presym2}) 
 since the it has the same stable set as $\UU_\rhd$ then it is critical and voter-eroding so our main theorem holds
 for this new family.

 In general, if we consider any rule $X_0\subset\mathbb H_{e_1}$, $X_0\ne \{(-1,1), (-1,-1)\}$ such that there exist
 $x,x'\in X_0$ with $x_2\ge -x_1$ and $x_2'\le x_1'$, we can check that $-e_1$ is a fair direction for the family $\UU_\rhd\cup\{X_0\}$,  and the latter has the same stable set as $\UU_\rhd$. Of course,
 we can construct infinitely many such families in this way. 
\end{rema}

\section{The process inside rectangles}\label{SectioninsideR}
We move to the proof of Theorem \ref{stronger}, let us fix a $(c,\mathcal{T})$-eroding critical family $\UU$.
The strategy starts by constructing a rapidly decreasing sequence $\{q_k\}_k$ with $q_0=q=1-p$, 
and study the process in big space and time scales. More precisely, set
$l_0=1, t_0=0$, and define for $k\ge 1$ the sequences
\begin{equation}
q_k:=\exp(-a/q_{k-1}),    
\end{equation}
\begin{equation}
 l_k:=\left\lfloor\left(\frac{1}{q_{k-1}}\right)^{3c}\right\rfloor,
\ \ \ \ t_k:=\left(\frac{1}{q_{k-1}}\right)^{2c},
\end{equation}
\begin{equation}
L_k:=\prod_{i=0}^kl_i,\ \ \ \ \ \ \ \ \
T_k:=\sum_{i=0}^kt_i.
\end{equation}

Here, we recall an upper bound
for $L_k$ in terms of $q_k$, which was computed in \cite{FSS02}.
\begin{lemma}
If $q$ is small enough, then for every $k\in\mathbb N$ we have
 \begin{equation}\label{delta}
L_{k}\le 1/q_k.
 \end{equation}
\end{lemma}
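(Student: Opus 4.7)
The plan is to induct on $k$. For the base case $k=0$, one has $L_0 = l_0 = 1 \le 1/q = 1/q_0$ provided $q \le 1$.

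For the inductive step, assume $L_{k-1}\le 1/q_{k-1}$. By the definition of $l_k$ and the inductive hypothesis,
\[
L_k \;=\; l_k\,L_{k-1} \;\le\; \left(\frac{1}{q_{k-1}}\right)^{3c}\cdot\frac{1}{q_{k-1}} \;=\; \left(\frac{1}{q_{k-1}}\right)^{3c+1}.
\]
Therefore it is enough to show
\[
\left(\frac{1}{q_{k-1}}\right)^{3c+1} \le \frac{1}{q_k} \;=\; \exp\!\left(\frac{a}{q_{k-1}}\right),
\]
which, after taking logarithms and multiplying by $q_{k-1}$, is equivalent to the single inequality
\[
(3c+1)\,q_{k-1}\log(1/q_{k-1}) \;\le\; a.
\]

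To secure this inequality uniformly in $k$ I will establish two auxiliary facts. First, if $q_0$ is sufficiently small then the sequence $\{q_k\}$ is non-increasing: the inequality $q_k \le q_{k-1}$ is equivalent to $q_{k-1}\log(1/q_{k-1}) \le a$, and a simple induction then shows $q_k \le q_0$ for every $k\ge 0$. Second, the function $f(x):=x\log(1/x)$ satisfies $f'(x)=-\log x - 1 > 0$ on $(0,1/e)$, and so is strictly increasing there. Combining these, if $q_0 < 1/e$ is chosen small enough that $q_0\log(1/q_0) \le a/(3c+1)$, then for every $k\ge 1$,
\[
q_{k-1}\log(1/q_{k-1}) \;\le\; q_0\log(1/q_0) \;\le\; \frac{a}{3c+1},
\]
which closes the induction.

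The argument is essentially routine bookkeeping; the only mild subtlety is the observation that $x\log(1/x)$ is increasing only on $(0,1/e)$ rather than globally monotone, which is precisely what forces the ``$q$ small enough'' hypothesis in the statement. I do not foresee any serious obstacle, since the doubly-exponential growth of $1/q_k$ easily absorbs the polynomial growth contributed by the $l_k$ at each step.
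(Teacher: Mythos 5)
Your proof is correct and follows the same route the paper indicates, namely induction on $k$: the paper gives only the one-line pointer ``By induction on $k$; see equation (4.8) in [FSS02],'' and your argument supplies exactly the bookkeeping that citation is hiding, reducing the step to $(3c+1)\,q_{k-1}\log(1/q_{k-1})\le a$ and closing it via the monotonicity of $x\mapsto x\log(1/x)$ on $(0,1/e)$ together with the non-increase of $\{q_k\}$. One small expository point: the ``simple induction'' showing $q_k\le q_0$ already needs the monotonicity of $x\log(1/x)$ (so that $f(q_{j-1})\le f(q_0)\le a$ propagates), so those two ``auxiliary facts'' are not independent and should be proved in tandem rather than stated as separate ingredients; this does not affect the validity of the argument.
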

\begin{proof}
By induction on $k$. For further details, see equation (4.8) in \cite{FSS02}.
\end{proof}
Now, consider the squares
\begin{equation}
    B_k:=[L_k]^2.
\end{equation}
At time $T_k$ we tile $\Zdd$ into copies of $B_k$ in the obvious way,
then, couple the $\UU$-voter dynamics with a {\it block-dynamics} (which is more `generous' to the spins in state
$-$), defined as follows: for every $k\ge 0$,
\begin{itemize}
 \item At time $T_k$ every copy of $B_k$ is monochromatic and the $\UU$-voter dynamics afresh;
 until it arrives at time $T_{k+1}$.
 \item As $t$ is close to $T_{k+1}$, if there exists some copy of $B_k$ inside some copy of $B_{k+1}$ which is in
 state $-$, then at time $T_{k+1}$ we declare the state of $B_{k+1}$ to be $-$. Otherwise we declare
 it to be $+$.
\end{itemize}
\begin{defi}
Define $\hat q_k$ as the probability that at time $T_k$ the block $B_k$ is in the state $-$ in the
block-dynamics.
\end{defi}

Note that $\hat q_0= q_0$. The following inequality is the core of the proof.
\begin{prop}\label{lema1}
If $q$ is small enough, then $\hat q_k\le q_k$, for every $k$.
\end{prop}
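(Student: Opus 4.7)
The plan is to proceed by induction on $k$. The base case $k=0$ is immediate since $\hat q_0 = q_0$ by definition. For the inductive step I assume $\hat q_k \le q_k$ and seek to bound
\[
\hat q_{k+1} = \p(B_{k+1}\text{ is }-\text{ at time }T_{k+1}).
\]
By construction of the block-dynamics, at time $T_k$ each of the $l_{k+1}^2$ copies of $B_k$ inside $B_{k+1}$ is independently in state $-$ with probability $\hat q_k \le q_k$. Rescaling so that each $B_k$-block is a unit cell, I take $A$ to be the coarse set of $-$ blocks inside (a mild enlargement of) $B_{k+1}$ and run the covering algorithm (Definition \ref{covalgo}) on $A$.

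I will decompose the bad event into three pieces. Let $E_1$ be the event that some coarse covered droplet has coarse-diameter at least $R^\ast := \lfloor 1/q_k\rfloor$; let $E_2$ be the event that $E_1^c$ holds yet some covered droplet fails to be fully eroded at time $T_{k+1}$ in the fine-scale $\UU$-voter dynamics; and let $E_3$ be the event that a $-$ spin present in $B_{k+1}$ at time $T_{k+1}$ originated outside $B_{k+1}$ at time $T_k$. The threshold $R^\ast$ is chosen so that any fine-scale covered droplet has diameter at most $R^\ast L_k \le q_k^{-2}$ (using the estimate $L_k \le 1/q_k$ from \eqref{delta}); by the voter-eroding hypothesis (Definition \ref{chuloenbola}) its erosion time is then at most $(R^\ast L_k)^c \le q_k^{-2c} = t_{k+1}$.

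For $\p(E_1)$, I combine the Aizenman--Lebowitz and extremal lemmas (Lemmas \ref{AL} and \ref{EL}): if any covered droplet has coarse-diameter $\ge R^\ast$, then there is one with coarse-diameter in $[R^\ast/3, R^\ast]$ containing at least $\varepsilon R^\ast$ initially $-$ blocks. Union-bounding over the $\le l_{k+1}^2$ locations and over choices of sub-collections yields
\[
\p(E_1) \le l_{k+1}^2 \binom{(R^\ast)^2}{\varepsilon R^\ast} q_k^{\varepsilon R^\ast} \le \exp\bigl(-\varepsilon'/q_k\bigr)
\]
for a constant $\varepsilon'>0$ depending only on $\UU$, provided $q$ is small enough. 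For $\p(E_2)$, on $E_1^c$ there are at most $l_{k+1}^2$ fine-scale droplets of diameter $\le q_k^{-2}$, and by Definition \ref{chuloenbola} each of them fails to erode in time $t_{k+1}$ with probability at most $\exp(-R^\ast L_k) \le e^{-1/q_k}$, so a union bound gives $\p(E_2) \le l_{k+1}^2 e^{-1/q_k}$. For $\p(E_3)$, since $-$ information spreads at finite speed and $L_{k+1} \ge l_{k+1} = q_k^{-3c} \gg t_{k+1}$, a standard graphical-representation plus Peierls-type argument yields $\p(E_3) \le \exp(-c'' L_{k+1})$, which is negligible.

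Summing the three contributions and choosing the constant $a$ in $q_{k+1} = \exp(-a/q_k)$ small enough compared to $\varepsilon'$ and the family-dependent eroding constants, I conclude $\hat q_{k+1} \le q_{k+1}$, closing the induction. The main obstacle I anticipate is the combinatorial bound for $\p(E_1)$: one must properly combine Aizenman--Lebowitz with the extremal lemma, execute the union bound over sub-configurations so that the resulting exponent in $1/q_k$ dominates the combinatorial factor, and then calibrate $a$ accordingly. A secondary technical point is making $\p(E_3)$ rigorous via a coupling that freezes the exterior of a buffer around $B_{k+1}$ in state $+$ and stochastically dominates the discrepancy between the two dynamics.
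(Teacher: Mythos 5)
Your plan is the same as the paper's: couple the $\UU$-voter dynamics with the block-dynamics, control the $-$ spins inside $B_{k+1}$ by (i) running the covering algorithm to show all covered droplets are small, (ii) using the voter-eroding hypothesis to erode each small droplet within time $t_{k+1}$, and (iii) controlling the influence of the exterior by a finite-propagation-speed argument. The paper implements (iii) by explicitly introducing an enlarged block $B_{k+1}'$ with sidelength $\tfrac{5}{3}L_{k+1}$ and the finite-volume process $\mathcal P$ with $+$ boundary conditions, and then bounds the probability of a path of clock rings from outside $B_{k+1}'$ into $B_{k+1}$ (Lemma~\ref{lema4}); you gesture at this with a ``buffer'' but do not make the process precise. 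That is a matter of detail, not of approach.

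There is, however, a genuine quantitative error in your bound for $\p(E_1)$. With the threshold $R^\ast=\lfloor 1/q_k\rfloor$ (no small constant in front), the quantity you write down does \emph{not} decay. Indeed
\[
\binom{(R^\ast)^2}{\varepsilon R^\ast}\,q_k^{\varepsilon R^\ast}
\;\le\;\left(\frac{e R^\ast q_k}{\varepsilon}\right)^{\varepsilon R^\ast}
\;\approx\;\left(\frac{e}{\varepsilon}\right)^{\varepsilon/q_k},
\]
which is exponentially \emph{large} in $1/q_k$ when $\varepsilon<e$. The paper avoids this by choosing the threshold $\varepsilon^2 n^{1/3c}$ (with $n=l_{k+1}$, so $n^{1/3c}q_k\le1$): then $sq_k\le3\varepsilon^2$ throughout the relevant range and the base becomes $3e\varepsilon<1$ for $\varepsilon$ small. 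Your threshold must likewise be shrunk by a factor like $\varepsilon^2$ (and the Aizenman--Lebowitz step then gives diameters in $[\varepsilon^2 n^{1/3c},3\varepsilon^2 n^{1/3c}]$ rather than $[R^\ast/3,R^\ast]$); the subsequent erosion-time argument still goes through since the fine-scale droplet diameter is then $\le \varepsilon^2 q_k^{-2}\le q_k^{-2}$ and $t_{k+1}=q_k^{-2c}$. You flagged this computation as ``the main obstacle,'' which is fair; but as written the claimed inequality is false, and fixing it requires precisely the extra $\varepsilon^2$ in the threshold that the paper uses.

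A secondary gap is that your $E_3$ estimate is asserted rather than proved. The paper's Lemma~\ref{lema4} shows $\p(F_{k+1}')\le q_{k+1}/2$ by bounding the probability that a path of clock rings of length $\Omega(L_{k+1})$ occurs within a time window of length $t_{k+1}=q_k^{-2c}\ll l_{k+1}\approx q_k^{-3c}$; the same Poisson-tail/entropy computation is what your ``graphical representation plus Peierls'' would have to produce. Including this would complete the argument.
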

In order to prove this proposition, we proceed by induction.
Let us assume that it holds for $k$, then we will prove a series of lemmas, and finally deduce that the proposition holds for $k+1$ in Subsection \ref{pruebadepropcore}. 

Let $B_k'$ be the block with the same center as $B_k$, but with sidelength $\frac 53L_k$. 
\begin{defi}
We define $\mathcal{P}$ to be the process  obtained by running the $\UU$-voter dynamics only on the induced graph $\Zdd[B_{k+1}']$, with $+$ boundary conditions.
\end{defi}
Consider the evolution of $\mathcal P$ during the interval $[T_k,T_{k+1})$ and define the event
\begin{equation}
 F_{k+1}:=\{\exists -\in B_{k+1} \text{ as } t 
 \nearrow T_{k+1}\}.
\end{equation}

In the next two subsections we will prove the
\begin{lemma}\label{lema2}
$\p_p(F_{k+1})\le q_{k+1}/2$.
\end{lemma}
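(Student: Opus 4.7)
The plan is a two-scale analysis: first, use the bootstrap percolation structure at the block scale to force every $-$-droplet to be ``not too large''; second, iterate the voter-eroding bound to drive each such droplet to extinction inside the time window $[T_k,T_{k+1})$.

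By the inductive hypothesis $\hat q_k \le q_k$ and a standard monotone coupling, the set $A$ of $-$ blocks inside $B_{k+1}'$ at time $T_k$ is stochastically dominated by a $q_k$-random subset of the block-scale grid $B_{k+1}'/B_k$, of cardinality $O(l_{k+1}^2)$. Apply the covering algorithm (Definition \ref{covalgo}) at the block scale to $A$, producing $\mathcal{T}$-droplets $D_1^{\mathrm{bl}},\dots,D_z^{\mathrm{bl}}$, and lift each to the smallest original-scale $\mathcal{T}$-droplet $D_i^{\mathrm{orig}}$ containing the corresponding union of blocks, so that $\mathrm{diam}(D_i^{\mathrm{orig}}) \le L_k \cdot \mathrm{diam}(D_i^{\mathrm{bl}}) + O(L_k)$. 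Since each $D_i^{\mathrm{orig}}$ is $\mathcal{U}$-stable (because $\mathcal{T} \subset \mathcal{S}$), monotonicity of the $\mathcal{U}$-voter dynamics together with the $+$ boundary conditions of $\mathcal{P}$ confine the $-$ region throughout $[T_k,T_{k+1})$ inside $\bigcup_i D_i^{\mathrm{orig}}$.

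Set $L^\star := c_1/q_k$ in the block scale, with small $c_1 > 0$. Call $G := \{\exists\, i:\ \mathrm{diam}(D_i^{\mathrm{bl}}) > L^\star\}$ the geometric bad event. By Lemma \ref{AL}, $G$ forces the existence of a covered $D'$ with $\mathrm{diam}(D') \in [L^\star,3L^\star]$, and Lemma \ref{EL} then gives $|D' \cap A| \ge \varepsilon L^\star$. A union bound over positions and shapes, together with a Chernoff-type estimate, yields
\[
\Pr(G) \le l_{k+1}^{2} \cdot (L^\star)^{O(1)} \cdot (C q_k L^\star)^{\varepsilon L^\star} \le \exp\!\big(-\Omega(1/q_k)\big) \le q_{k+1}/4,
\]
provided $c_1$ is chosen so that $C q_k L^\star = C c_1 < 1/2$ and the constant $a$ in $q_{k+1} = e^{-a/q_k}$ is small enough. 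On $G^c$, every droplet satisfies $\mathrm{diam}(D_i^{\mathrm{orig}}) \le L^\star L_k + O(L_k) \le 1/q_k^{2}$. I then erode each $D_i^{\mathrm{orig}}$ by splitting $[T_k,T_{k+1})$ into $k^\star$ sub-windows of length $(\mathrm{diam}(D_i^{\mathrm{orig}}))^c$; monotonicity guarantees that at the start of each sub-window the true configuration is dominated by the fully-$-$ droplet, so by Definition \ref{chuloenbola} and the strong Markov property the total failure probability is at most $e^{-k^\star \mathrm{diam}(D_i^{\mathrm{orig}})}$. A short calculation shows that, since $c > 1$, one can choose $k^\star$ with $k^\star \cdot \mathrm{diam}(D_i^{\mathrm{orig}}) \ge 3a/q_k$ and $k^\star \cdot (\mathrm{diam}(D_i^{\mathrm{orig}}))^c \le t_{k+1} = (1/q_k)^{2c}$, yielding $e^{-3a/q_k}$ per droplet. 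Union bounding over the $\le l_{k+1}^2$ droplets gives total erosion failure $\le q_{k+1}/4$ for $q$ small, and combining with $\Pr(G)$ proves $\Pr_p(F_{k+1}) \le q_{k+1}/2$.

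The main obstacle will be the careful balancing of constants: the size threshold $L^\star$ must be small enough that the block-scale bootstrap estimate beats $q_{k+1}$, yet large enough that iterated voter-eroding fits inside the time budget $t_{k+1}$; this forces coordinated choices of $a$, $c_1$, and the covering-algorithm scale $\kappa$, and also requires care in the block-to-original lift so that $\mathcal{U}$-stability is preserved and the inclusion $A_{\mathrm{orig}} \subset \bigcup_i D_i^{\mathrm{orig}}$ really holds for the monotone coupling.
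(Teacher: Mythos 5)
Your proof follows essentially the same two-stage structure as the paper: condition on the geometric event that every block-scale covered droplet is small (the paper's event $E$, bounded via Lemmas~\ref{AL} and \ref{EL} exactly as in Lemma~\ref{lema3}), then erode the surviving droplets with the voter-eroding hypothesis inside the time budget $t_{k+1}$ (Lemma~\ref{lema3'}), and finally add the two error terms. Your bound $\Pr(G)\le q_{k+1}/4$ is the same computation as the paper's $\p_p(E^c)\le q_{k+1}/4$ up to cosmetics.

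The one place you genuinely depart is the erosion step. You split $[T_k,T_{k+1})$ into $k^\star$ sub-windows of length $\mathrm{diam}(D_i^{\mathrm{orig}})^c$ and iterate the eroding bound via the strong Markov property and monotonicity, aiming for roughly $e^{-3a/q_k}$ per droplet. That iteration is valid (the confinement of the $-$ region inside each stable $\mathcal{T}$-droplet and the monotone domination at the start of each sub-window are both correct), but it is an unnecessary complication: a $(\mathcal{T},L)$-droplet is by Definition~\ref{DefiDrop} any droplet of diameter \emph{at most} $L$, so each surviving $D_i^{\mathrm{orig}}$ is a $(\mathcal{T},(1/q_k)^2)$-droplet regardless of its actual size. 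The paper applies Definition~\ref{chuloenbola} once with $L=(1/q_k)^2$, for which $t_{k+1}=(1/q_k)^{2c}=L^c$ is the exact time budget, and gets the much stronger per-droplet bound $e^{-(1/q_k)^2}\ll q_{k+1}$ in a single stroke. Your route reaches the same conclusion, but your constant-balancing (coordinating $a$, $c_1$, $\kappa$, $k^\star$) is largely dissolved once you use the diameter upper bound $L$ rather than the droplet's own diameter.
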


\subsection{Bootstrapping the vertices in state $-$}
Set $n=l_{k+1}$ and identify each vertex in $[n]^2$ with a copy of 
$[L_k]^2$. Consider $\UU$-bootstrap percolation on $[n]^2$ by setting the initially infected set $A$ to consist of all vertices in state $-$.
Then run the covering algorithm (see Definition \ref{covalgo}), by declaring the infected sites to be those in state $-$ (thus, spins $+$ become $-$), until we stop with a finite collection of droplets, say $\{D_1,\dots,D_z\}$, each one entirely $-$.
Consider the event
\begin{equation}
 E=\left\{\text{diam}(D_i)\le\varepsilon^{2}n^{1/3c}, \text{ for all }i=1,\dots,z\right\},
\end{equation}
where $\varepsilon>0$ is the constant given by Lemma \ref{EL}.
\begin{lemma}\label{lema3}
$\p_p(E^c)\le {q_{k+1}}/{4}$.
\end{lemma}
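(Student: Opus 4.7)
The plan is to apply the standard critical-bootstrap-percolation machinery: reduce $E^c$ to the existence of a medium-scale witness via Lemmas~\ref{AL} and~\ref{EL}, then union-bound over candidate droplets.

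Setting $L_0 := \lceil \varepsilon^2 n^{1/3c}\rceil$, if $E^c$ occurs, then some $D_i$ from the covering algorithm satisfies $\text{diam}(D_i) > L_0$. Applying Lemma~\ref{AL} at scale $L_0$ yields a covered droplet $D'\subset D_i$ with $L_0 \le \text{diam}(D') \le 3L_0$, and Lemma~\ref{EL} then gives $|D'\cap A| \ge \varepsilon\cdot\text{diam}(D') \ge \varepsilon L_0$. Hence $E^c$ is contained in the event that some $\mathcal{T}$-droplet $D'\subset[n]^2$ of diameter at most $3L_0$ satisfies $|D'\cap A|\ge \varepsilon L_0$.

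Next I would union-bound. A $\mathcal{T}$-droplet is determined by its $|\mathcal{T}|$ translation parameters, so the number of candidate $D'$ contained in $[n]^2$ with diameter at most $3L_0$ is at most $n^A$ for some constant $A = A(|\mathcal{T}|)$. Each such $D'$ satisfies $|D'| \le 9 L_0^2$, and by the inductive hypothesis of Proposition~\ref{lema1} the initially infected cells are i.i.d.\ Bernoulli$(\hat{q}_k)$ with $\hat{q}_k\le q_k$, so
\[
\p_p\bigl(|D'\cap A|\ge \varepsilon L_0\bigr) \le \binom{9L_0^2}{\lceil \varepsilon L_0\rceil}q_k^{\varepsilon L_0} \le \left(\frac{9 e L_0 q_k}{\varepsilon}\right)^{\varepsilon L_0}.
\]

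To close, I would exploit $n = l_{k+1}\le (1/q_k)^{3c}$, which gives $L_0 q_k \le 2\varepsilon^2$ for $q_k$ small enough, hence $9eL_0 q_k/\varepsilon \le 18e\varepsilon$. Since Lemma~\ref{EL} remains valid after replacing $\varepsilon$ by any smaller constant, I may WLOG assume $\varepsilon < 1/(50e)$, making the bracket $\le e^{-1}$ and yielding $\p_p(E^c)\le n^A e^{-\varepsilon L_0}$. Using the matching lower bound $n^{1/3c}\ge c_0/q_k$, the exponent $\varepsilon L_0 \ge \varepsilon^3 c_0/q_k$ easily absorbs the polynomial factor $n^A = e^{O(\log(1/q_k))}$, giving $\p_p(E^c) \le \exp\bigl(-\varepsilon^3 c_0/(2q_k)\bigr)$. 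Choosing the free constant $a$ in $q_k = \exp(-a/q_{k-1})$ small enough, say $a < \varepsilon^3 c_0/2$, then forces $\p_p(E^c)\le q_{k+1}/4$. The main (but purely cosmetic) obstacle is bookkeeping: verifying that the absolute constants from AL/EL and from counting $\mathcal{T}$-droplet shapes can all be absorbed by taking $a$ and $\varepsilon$ small enough, which the multi-scale choice of $l_{k+1}$ and $q_{k+1}$ was tailored to allow.
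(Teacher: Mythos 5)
Your proposal is correct and follows essentially the same route as the paper: apply the Aizenman--Lebowitz lemma to extract a medium-scale witness droplet, use the extremal lemma to force a linear number of initially $-$ blocks inside it, union-bound over droplets (the paper phrases this as a first-moment Markov bound on the number $N$ of such droplets, which is the same thing), and close using $n^{1/3c}q_k = O(1)$ together with the freedom to shrink $\varepsilon$ and $a$. The only cosmetic difference is that the paper sums over diameters $s$ in the range $[\varepsilon^2 n^{1/3c}, 3\varepsilon^2 n^{1/3c}]$ and writes the droplet count as $O(n^{2+1/3c})$, whereas you substitute the endpoints of that range directly and lump the polynomial prefactor into $n^A$; both are absorbed identically by the stretched-exponential gain.
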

\begin{proof}
If some $D_i$ has diameter bigger than $\varepsilon^{2}n^{1/3c}$,
then by Lemma \ref{AL} there exists a covered droplet $D$ with $\varepsilon^{2}n^{1/3c}\le
\text{diam}(D)\le 3\varepsilon^{2}n^{1/3c}$. If $N$ denotes the number of such droplets when $A$ is $q_k$-random, then by Markov's inequality we have
\begin{align*}
 \p_p(E^c) \le
 \e_p[N] &\le \sum_{s=\varepsilon^{2}n^{1/3c}}^{3\varepsilon^{2}n^{1/3c}}n^3{{s^2}\choose{\varepsilon s}}
q_k^{\varepsilon s}
\le \sum_{s=\varepsilon^{2}n^{1/3c}}^{3\varepsilon^{2}n^{1/3c}}n^3\left(\frac{esq_k}{\varepsilon}
\right)^{\varepsilon s}
\\ & \le n^3\sum_{s=\varepsilon^{2}n^{1/3c}}^{3\varepsilon^{2}n^{1/3c}}\left( 3e\varepsilon\right)^{\varepsilon s}
\le C n^3\exp\left(-\varepsilon^{2}n^{1/3c}\right)\\
& \le\frac{\exp\left(-2an^{1/3c}\right)}{4},
\end{align*}
(here we used $n^{1/3c}q_k\le 1$ and picked $a<\varepsilon^2/2$), by Lemma \ref{EL}, since the number of droplets in $\Zdd_n$
with diameter $s$ is $O(n^{2+1/3c})$, and each has area at most $s^2$.
Finally, note that $\exp(-2an^{1/3c})\le q_{k+1}$.
\end{proof}
\begin{lemma}\label{lema3'}
If $a$ is small enough, then \begin{equation}\label{step2}
\p_p(F_{k+1}|E)\le{q_{k+1}}/{4} 
\end{equation}
\end{lemma}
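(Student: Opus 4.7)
The plan is to apply the voter-eroding property of $\UU$ droplet-by-droplet: conditionally on $E$, each droplet found by the covering algorithm is small enough to be eroded within time $t_{k+1}$ with overwhelming probability.

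Conditional on $E$, each covered droplet $D_i \subset [n]^2$ has block-diameter at most $\varepsilon^2 n^{1/3c}$. Blowing up each block to an $L_k \times L_k$ square yields a $(\mathcal T, M)$-droplet $\tilde D_i \subset B_{k+1}$ in $\Zdd$, with
\[
M = O(L_k n^{1/3c}) \le C/q_k^2,
\]
using $L_k \le 1/q_k$ and $n^{1/3c} \le 1/q_k$. In particular $M^c = O(1/q_k^{2c}) = O(t_{k+1})$, and absorbing the resulting constant into $t_k$ (or alternatively relying on the flexibility at the end of Proposition \ref{erosion1} that $\UU$ is $(c+\eps,\mathcal T)$-eroding for any $\eps > 0$), we may assume $M^c \le t_{k+1}$.

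The key structural observation is that, in the process $\mathcal P$, the set of $-$ sites at any time $t \in [T_k, T_{k+1})$ is contained in the $\UU$-bootstrap closure of the initial $-$ blocks, and therefore in $\bigcup_i \tilde D_i$: a $+$ site cannot flip to $-$ unless some rule disagrees with it, and the bootstrap closure is closed under exactly this operation. Moreover, the covering algorithm guarantees that the $\tilde D_i$ are pairwise separated by distance at least $\kappa L_k$ in $\Zdd$, which exceeds the maximum rule diameter for $k$ large; combined with the generous $+$-buffer between $B_{k+1}$ and $\partial B'_{k+1}$, this forces each $\tilde D_i$ to evolve exactly as if its complement were frozen at $+$. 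By monotonicity, since the initial $\mathcal P$ configuration contains at most as many $-$s inside $\tilde D_i$ as the ``all $-$'' configuration of Definition \ref{chuloenbola}, the erosion time of $\tilde D_i$ is stochastically dominated by $T(\tilde D_i)$; the voter-eroding hypothesis then gives
\[
\p_p(\tilde D_i \text{ is not fully $+$ at time } T_{k+1}) \le \p_p(T(\tilde D_i) > M^c) \le e^{-M}.
\]
Taking a union bound over the $z \le n^2$ droplets and using $M \ge c'/q_k^2$ for some $c' > 0$,
\[
\p_p(F_{k+1}\mid E) \le n^2 e^{-M} \le \exp\bigl(6c \log(1/q_k) - c'/q_k^2\bigr),
\]
which is at most $q_{k+1}/4 = \exp(-a/q_k)/4$ provided $a$ is chosen small enough.

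The main obstacle is justifying the decoupling of the droplets in the third paragraph: one must verify that the spatial separation supplied by the covering algorithm (after choosing $\kappa$ large enough relative to the rule diameters), together with the fat $+$-buffer around $B_{k+1}$ inside $B'_{k+1}$, really suffices to reduce the erosion of each $\tilde D_i$ to the isolated setting of Definition \ref{chuloenbola} and thereby invoke the voter-eroding hypothesis without modification. The remaining bookkeeping of constants (the $C$ that enters $M^c \le t_{k+1}$) is straightforwardly absorbed using the freedom in Proposition \ref{erosion1}.
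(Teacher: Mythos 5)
Your proposal is correct and follows the same strategy as the paper's proof: bound the diameter of each covered droplet, apply the voter-eroding property to each, and take a union bound.

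Two small comparisons. First, the paper sidesteps the constant you worry about in $M^c$ by simply using $\varepsilon^2 < 1$, so that the droplet diameter in $\Zdd$ is at most $\varepsilon^2 l_{k+1}^{1/3c} L_k \le (1/q_k)^2$ with no extra multiplicative constant, and then $M^c \le (1/q_k)^{2c} = t_{k+1}$ exactly. Your fallback to the $(c+\varepsilon)$-eroding freedom from Proposition \ref{erosion1} is a valid alternative but is not needed. Second, you spend a paragraph justifying the droplet decoupling. This is a real point the paper treats implicitly: since each $\tilde D_i$ is a $\mathcal T$-droplet with $\mathcal T\subset\Ss$, the $-$ phase can never escape any $\tilde D_i$ (this is the same reason $T(D)$ is well-defined after Definition \ref{TdD}), and the covering algorithm forces the droplets to be separated by at least $\kappa L_k$, exceeding the maximum rule diameter. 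Together these make the dynamics inside each droplet exactly the isolated-droplet dynamics of Definition \ref{chuloenbola}, so your stochastic domination step goes through; the paper states this only as ``an upper bound is obtained by starting the evolution at time $T_k$ with $-$ spins at all sites of the droplets.'' Finally, you bound $z\le n^2$ while the paper uses the looser $z\le|B_{k+1}'|\le 1/q_{k+1}$; both are swallowed by the $\exp(-\Omega(1/q_k^2))$ factor, and (as you correctly leave implicit) the constraint here is really on $q$ being small rather than on $a$.
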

We move the proof of this statement to the next subsection. Now, it is easy to deduce Lemma \ref{lema2}.
\begin{proof}[Proof of Lemma \ref{lema2}]
Note that $\p(F_{k+1})\le\p_p(F_{k+1}|E)+\p_p(E^c)$
and apply Lemmas \ref{lema3} and \ref{lema3'}.
\end{proof}

\subsection{Erosion step}
To get \eqref{step2} we will use the voter-eroding property of $\UU$. 
We need to estimate the probability that
starting at time $T_k$ from a configuration in $B_{k+1}$ where $E$ holds and letting the system evolve with + boundary
conditions, some spin $-$ will be present as $t$ is close to $T_{k+1}$. 
An upper bound is obtained by
starting the evolution at time $T_k$ with $-$ spins at all sites of the droplets $D_1,\dots,D_z$ participating in $E$.
\begin{proof}[Proof of Lemma \ref{lema3'}]
If $E$ occurs, by \eqref{delta}, for small $q$ each droplet has diameter at most 
\[\varepsilon^{2}l_{k+1}^{1/3c}\cdot L_k\le \varepsilon^{2}\frac{1}{q_{k}}\left(\frac{1}{q_{k}}\right)
\le \left(\frac{1}{q_{k}}\right)^{2},\]
hence, each $D_i$ is a $\left(\mathcal T, (1/q_k)^{2}\right)$-droplet.
Since $\UU$ is $(c,\mathcal T)$-eroding, if $q$ is small enough, for each $i=1,\dots,z$, the probability that at time $T_{k+1}=T_k+ (1/q_{k})^{(2)c}$
there is any spin $-$ inside $D_i$ is at most
\[\p_p\left[T(D_i)>\left(\frac{1}{q_{k}}\right)^{2c}\right]
\le
\exp\left(- \left(\frac{1}{q_{k}}\right)^{2}\right).\]

For small $q$ we also have $z\le |B_{k+1}'|\le [(5/3)L_{k+1}]^2\le 1/{q_{k+1}}$, by \eqref{delta} again. Therefore
\[\p_p[F_{k+1}|E]\le \frac{1}{q_{k+1}}\exp\left(- \left(\frac{1}{q_{k}}\right)^{2}\right)\le \frac{q_{k+1}}{4},\]
for $q$ small.  
\end{proof}

\section{Wrapping up}\label{SectionWrapup}
In this last section, we finish the proof of Theorem \ref{stronger}.
\subsection{Control of the outer influence: Proof of Proposition \ref{lema1}}\label{pruebadepropcore}
The strategy now will be the following: we will prove that the probability that by time $T_{k+1}$ the state of every vertex in $B_{k+1}$ in the original dynamics differs from the process $\mathcal{P}$ is small.
We will do this by arguing that the probability of having some $-$ inside $B_{k+1}$ with the help of some vertex outside $B_{k+1}'$ on time $[T_k,T_{k+1})$ is small. Then, by using Lemma \ref{lema2}, we will deduce Proposition \ref{lema1}.
Finally, we will put all the pieces together and deduce Theorem \ref{stronger}.

\begin{defi}
We call a sequence $(x_1,s_1),\dots,(x_r,s_r)$ of vertex-time pairs, where $x_i\in\Zdd$ and $s_i\ge 0$,
a {\it path of clock rings} (and say that such a sequence is a path from $x_1$ to $x_r$ in time
$[s_1,s_r]$) if
\begin{enumerate}
 \item $0<\|x_{i+1}-x_i\|_1\le C$ for each $i\in[r-1]$, where $C=\max\limits_{X\in \UU}\{\|x\|:x\in X\}$.
 \item $s_1<\cdots<s_r$.
 \item The clock of vertex $x_i$ rings at time $s_i$ for each $i=1,\dots,r$.
\end{enumerate}
\end{defi}
The key point now is that
if there does not exist a path of clock-rings from $x_1$ to $x_r$ in time $[s_1,s_r]$, then the state
of vertex $x_r$ at time $s_r$ is independent of the state of vertex $x_1$ at time $s_1$.
\begin{lemma}\label{lema4}
If $F_{k+1}'$ is the event that there exists a path of clock-rings from some vertex outside
$B_{k+1}'$ to some vertex inside $B_{k+1}$ in time $[T_k,T_{k+1}]$, then
\begin{equation}\label{Fk'}
 \p_p(F_{k+1}')\le q_{k+1}/2.
\end{equation}
\end{lemma}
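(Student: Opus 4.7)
The plan is a standard union bound over paths of clock-rings, combining two ingredients: every realizing path must be very long (since $B_{k+1}'$ is a strict thickening of $B_{k+1}$), and the factorial $1/r!$ coming from the time-ordering constraint beats the combinatorial count of paths.

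\emph{Geometric lower bound on path length.} The gap between $B_{k+1}$ and the complement of $B_{k+1}'$ is an $\ell_\infty$-strip of width $(5/3-1)L_{k+1}/2 = L_{k+1}/3$. Since each step of a path satisfies $\|x_{i+1}-x_i\|_1\leq C$, any path that crosses this strip has length at least $r_0 := \lceil L_{k+1}/(3C)\rceil$.

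\emph{Per-path probability.} For a fixed spatial trajectory $(x_1,\ldots,x_r)$ obeying the step constraints, Markov's inequality together with Campbell's formula for independent unit-rate Poisson processes yields
\[\p_p\bigl(\exists\, T_k\leq s_1 < \cdots < s_r \leq T_{k+1}:\,s_i\text{ is a clock ring at }x_i\bigr) \leq \frac{t_{k+1}^r}{r!}.\]
The factorial suppression comes from the ordering constraint and remains valid when vertices repeat in the trajectory (via the factorial moments of the Poisson distribution).

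\emph{Counting and union bound.} Setting $N := |\{y\in \Zdd: 0 < \|y\|_1 \leq C\}|$, the number of spatial trajectories of length $r$ ending at a prescribed vertex is at most $N^{r-1}$. Since there are $|B_{k+1}|=L_{k+1}^2$ possible endpoints, using $r!\geq (r/e)^r$ we obtain
\[\p_p(F_{k+1}') \leq L_{k+1}^2\sum_{r\geq r_0}\frac{(Nt_{k+1})^r}{r!} \leq L_{k+1}^2\sum_{r\geq r_0}\left(\frac{eNt_{k+1}}{r}\right)^r.\]

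\emph{Conclusion.} Plugging in $t_{k+1}=(1/q_k)^{2c}$ and $r_0 \geq l_{k+1}/(3C) = \Theta((1/q_k)^{3c})$, one finds $eNt_{k+1}/r_0 = O(q_k^c)$, so the series is geometric and dominated by its first term of order $\exp\bigl(-\Theta((1/q_k)^{3c}\log(1/q_k))\bigr)$. Using the bound $L_{k+1}^2\leq 1/q_{k+1}^2$ from \eqref{delta}, this is doubly exponentially smaller than $q_{k+1}/2 = \tfrac12 e^{-a/q_k}$ for $q$ sufficiently small.

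The main potential obstacle is merely quantitative: making sure the $r!$ really beats the $N^r\cdot L_{k+1}^2$ combinatorics. Because $r_0 \sim (1/q_k)^{3c}$ is superpolynomial in $1/q_k$ while the target $q_{k+1}$ is only of order $e^{-a/q_k}$, there is ample slack. The one non-routine point is verifying the $t^r/r!$ per-path bound when vertices may repeat, which follows from the factorial-moment formula for Poisson processes.
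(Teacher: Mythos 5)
Your proof is correct and uses the same overall decomposition as the paper: a geometric lower bound of order $L_{k+1}/C$ on the length of any path from outside $B_{k+1}'$ to inside $B_{k+1}$, a per-path probability estimate, and a union bound over spatial trajectories. The one genuine difference is the per-path estimate. You bound the probability that a fixed trajectory $(x_1,\dots,x_r)$ admits an increasing chain of clock rings by the \emph{first-moment} estimate $t_{k+1}^r/r!$, invoking Mecke/Campbell and Poisson factorial moments to cover the case of repeated vertices; via $r!\ge (r/e)^r$ this gives $(et_{k+1}/r)^r$. The paper instead uses a more elementary device: along the chain, let $s_m$ be the first ring at $x_m$ after $s_{m-1}$, observe that the increments $s_m-s_{m-1}$ are independent Exponential(1) variables, and note that if $s_r\le T_{k+1}$ then at least $r/2$ of the increments are $\le 2t_{k+1}/r$; a binomial tail bound then yields $(4et_{k+1}/r)^{r/2}$. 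Your bound is sharper (exponent $r$ rather than $r/2$) and arguably cleaner once the Mecke step is accepted, while the paper's argument avoids any Poisson-process machinery and stays entirely combinatorial. Either estimate has an enormous amount of slack, since $r_0=\Theta((1/q_k)^{3c})$ while $t_{k+1}=(1/q_k)^{2c}$ and the target $q_{k+1}/2$ is only singly exponential in $1/q_k$; your final comparison with $q_{k+1}/2$ and the use of \eqref{delta} are fine.
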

With this estimate, we are ready to finish our induction step.
\begin{proof}[Proof of Proposition \ref{lema1}]
If $F_{k+1}'$ does not occur,
then the state of every vertex in $B_{k+1}$ at time $T_{k+1}$ is the same in the $\UU$-voter dynamics as it is in the process $\mathcal P$, since the boundary conditions cannot affect $B_{k+1}$.
This gives $\hat q_{k+1}\le \p_p(F_{k+1})+ \p_p(F_{k+1}')$, and the result follows from Lemmas \ref{lema2} and \ref{lema4}.
\end{proof}
It only remains to show inequality (\ref{Fk'}).
\begin{proof}[Proof of Lemma \ref{lema4}]
If $F_{k+1}'$ occurs, every such a path have length at least \[r_k:= \left\lfloor\frac{1}{3C}
L_{k+1}\right\rfloor\ge\frac{1}{6C}\left(\frac{1}{q_{k}}\right)^{3c}.\]
By \eqref{delta}, for each $r\in\mathbb N$, the number of paths of length $r$ starting on 
the boundary of $B_{k+1}'$ is at most
\[O\left(L_{k+1}(4C^2)^r\right)\le \frac{1}{q_{k+1}}(4C^2)^r.\] 

Let $P_k(r)$ be the probability that there exist times $T_k\le s_1<\cdots<s_r\le T_{k+1}$ such that $(x_1,s_1),\dots,(x_r,s_r)$ is a path of clock rings.
Observe that $P_k(r)$ does not depend on the choice of the path, since all clocks have the same distribution. 
We have to bound $P_k(r)$.

Set $s_0=T_k$ and for every $m\in[r]$ choose $s_m$ to be the first time the clock at $x_m$ rings after time $s_{m-1}$.
Let $G_m$ be the event that $s_m-s_{m-1}\le 2t_{k+1}/r$, so 
\[\p_p(G_m)=1-\exp(-2t_{k+1}/r)\le 2t_{k+1}/r,\]
and the events $G_m$ are independent, therefore
\begin{align*}
P_k(r)& =\p_p(s_r\le T_{k+1}) 
\le \p_p\left(\sum_{m=1}^r{\bf 1}_{G_m} \ge r/2\right) \\
& \le {{r}\choose{r/2}}
\left(\frac{2t_{k+1}}{r}\right)^{r/2} 
 \le \left(\frac{4et_{k+1}}{r}\right)^{r/2}.
\end{align*}
Finally, observe that for $r\ge r_k$ we have $\dfrac{r}{t_{k+1}}\ge \dfrac{1}{6C}\dfrac{1}{q_{k}^{c}}$ so

\begin{align*}
 \p_p(F_{k+1}')  &  \le\sum_{r=r_k}^\infty \frac{1}{q_{k+1}}(4C^2)^r\left(\frac{4et_{k+1}}{r}\right)^{r/2} 
 \le\frac{1}{q_{k+1}}\sum_{r=r_k}^\infty \left(O(q_{k}^{c})\right)^{r/2}\\
& \le\frac{1}{q_{k+1}}\exp\left[-\Omega\left(\frac{1}{q_{k}}\right)^{3c}\right] 
 \le q_{k+1}/2,
\end{align*}
since $c>1$.
\end{proof}
This closes the proof of Proposition \ref{lema1}, and we are ready to prove Theorem \ref{stronger}.
\subsection{All together now}
In this last section we will finish the proof by showing that
\begin{equation}
  \p_p[\sigma_t(0)=-]\le \exp(-t^{\gamma}),
 \end{equation}
for all $t>0$.
\begin{proof}[Proof of Theorem \ref{stronger}]
By Proposition \ref{lema1}, for all times $T_k$ we already have
\[\p_p(\sigma_{T_k}(0)=-)\le\hat q_k\le q_k=\exp\left(-at_k^{1/2c}\right),\]
and $t_{k-1}/t_k=(q_{k-1}/q_{k-2})^{2c}\le c'$ for some constant $c'<1$, so, by definition, $T_k\le (1-c')^{-1}t_k$, hence
\[\p_p(\sigma_{T_k}(0)=-)\le \exp\left(-T_k^{1/3c}\right).\]

Therefore, Theorem \ref{stronger} holds for all times of the form $t=T_k$, with any $\gamma\le 1/3c$. To conclude that it holds for all $t>0$, we use the same coupling trick used in \cite{FSS02}, which consists of
comparing evolutions started from product measures with different values of $q$.

Let us rewrite $q_k,t_k,T_k$ as $q_k(q),t_k(q),T_k(q)$ because they depend on the initial $q$.
We have shown that there exists some $b>0$, such that for every $0<q\le b$, when $t=T_k(q)$ it holds that
\begin{equation}\label{forTk}
 \p_p(\sigma_{t}(0)=-)\le \exp\left(-t^{1/3c}\right).
\end{equation}

Now write $b_k=q_k(b), u_k=t_k(b)$ and $U_k=T_k(b)$, and observe that $b_k$ decreases
(so $u_k$ increases) as $k$ increases. For fixed $k$, consider the parameter $q$ decreasing continuously
from $b$ to $b_1$, so the corresponding $T_k(q)$ increasing continuously from $T_k(b)$ to
$T_k(b_1)= t_1(b_1)+\cdots+ t_k(b_1)= U_{k+1}-u_1$. 

By continuity of $T_k(q)$ and the intermediate value theorem,
any $t$ outside the union of intervals $I:=\bigcup\limits_{k\ge 1}[U_k-u_1,U_k)$ can be written as $t=T_{k(t)}(q[t])$, for some
$k(t)\ge 1$, $b_1<q[t]\le b$. 
Observe that $p=1-q\ge 1-b_1\ge 1-q[t]=:p[t]$.

Combining monotonicity and (\ref{forTk}) we get
\[\p_p(\sigma_{t}(0)=-)\le \p_{p[t]}(\sigma_{t}(0)=-)\le \exp\left(-t^{1/3c}\right),\]
and now, we have shown that the theorem holds for all $q<b_1$ and $t\notin I$.

Finally, suppose that $t\in [U_k-u_1,U_k)$ for some $k$. The key observation is the following: if $\sigma_{t}(0)=-$
and the spin at the origin does not flip between times $t$ and $U_k$ then $\sigma_{U_k}(0)=-$.
\begin{align*}
\p_p[\sigma_{t}(0)=-]&\le e^{u_1}\p_p[\sigma_{U_k}(0)=-]
 \le e^{u_1}\exp\left(-U_k^{1/3c}\right)\\
& \le \exp\left(-t^{1/4c}\right),
\end{align*}
since the probability that no flips occur at the origin from $t$ to $U_k$ is at least
$e^{-u_1}$. Thus, $\gamma=1/4c$ works for all $t\ge 0$.
\end{proof}



\section*{Acknowledgements}
The author is very thankful to Rob Morris for his  guidance and feedback throughout this project, and
Janko Gravner for his invaluable insight and suggestions on the final version of this paper.
The author would like to thank the  Instituto Nacional de Matem\'atica Pura e Aplicada (IMPA) for the time and space to create, research and write in this strong academic environment.




\bibliographystyle{plain}
\bibliography{References}
\end{document}